\newtheorem{thm}{Theorem}[section]
\newtheorem{cor}[thm]{Corollary}
\newtheorem{lem}[thm]{Lemma}
\newtheorem{prop}[thm]{Proposition}
\theoremstyle{definition}
\theoremstyle{remark}
\newtheorem{rem}{Remark}[section]
\begin{document}

\title{On lattices generated by algebraic conjugates of prime degree}

\author{Lenny Fukshansky}
\author{Evelyne Knight}

\address{Department of Mathematics, 850 Columbia Avenue, Claremont McKenna College, Claremont, CA 91711}
\email{lenny@cmc.edu}
\address{Department of Mathematics, Pomona College, 610 N. College Ave, Claremont, CA 91711, USA}
\email{eski2022@mymail.pomona.edu}

\subjclass[2020]{Primary: 11H06, 11R04, 11R06, 11R32}
\keywords{well-rounded lattices, Pisot polynomials, algebraic conjugates}

\begin{abstract} We consider Euclidean lattices spanned by images of algebraic conjugates of an algebraic number under Minkowski embedding, investigating their rank, properties of their automorphism groups and sets of minimal vectors. We are especially interested in situations when the resulting lattice is well-rounded. We show that this happens for large Pisot numbers of prime degree, demonstrating infinite families of such lattices. We also fully classify well-rounded lattices from algebraic conjugates in the 2-dimensional case and present various examples in the 3-dimensional case. Finally, we derive a determinant formula for the resulting lattice in the case when the minimal polynomial of an algebraic number has its Galois group of a particular type.
\end{abstract}

\maketitle

\def\A{{\mathcal A}}
\def\B{{\mathcal B}}
\def\C{{\mathcal C}}
\def\D{{\mathcal D}}
\def\F{{\mathcal F}}
\def\x{{\mathcal H}}
\def\I{{\mathcal I}}
\def\J{{\mathcal J}}
\def\K{{\mathcal K}}
\def\L{{\mathcal L}}
\def\M{{\mathcal M}}
\def\N{{\mathcal N}}
\def\O{{\mathcal O}}
\def\R{{\mathcal R}}
\def\s{{\mathcal S}}
\def\V{{\mathcal V}}
\def\W{{\mathcal W}}
\def\X{{\mathcal X}}
\def\Y{{\mathcal Y}}
\def\H{{\mathcal H}}
\def\Z{{\mathcal Z}}
\def\OO{{\mathcal O}}
\def\BB{{\mathbb B}}
\def\cee{{\mathbb C}}
\def\EE{{\mathbb E}}
\def\Nn{{\mathbb N}}
\def\pee{{\mathbb P}}
\def\que{{\mathbb Q}}
\def\real{{\mathbb R}}
\def\zed{{\mathbb Z}}
\def\hyp{{\mathbb H}}
\def\aa{{\mathfrak a}}
\def\HH{{\mathfrak H}}
\def\qbar{{\overline{\mathbb Q}}}
\def\eps{{\varepsilon}}
\def\ahat{{\hat \alpha}}
\def\bhat{{\hat \beta}}
\def\gt{{\tilde \gamma}}
\def\h{{\tfrac12}}
\def\be{{\boldsymbol e}}
\def\bei{{\boldsymbol e_i}}
\def\bff{{\boldsymbol f}}
\def\ba{{\boldsymbol a}}
\def\bb{{\boldsymbol b}}
\def\bc{{\boldsymbol c}}
\def\bm{{\boldsymbol m}}
\def\bn{{\boldsymbol n}}
\def\bk{{\boldsymbol k}}
\def\bi{{\boldsymbol i}}
\def\bl{{\boldsymbol l}}
\def\bq{{\boldsymbol q}}
\def\bu{{\boldsymbol u}}
\def\bt{{\boldsymbol t}}
\def\bs{{\boldsymbol s}}
\def\bv{{\boldsymbol v}}
\def\bw{{\boldsymbol w}}
\def\bx{{\boldsymbol x}}
\def\bX{{\boldsymbol X}}
\def\bz{{\boldsymbol z}}
\def\bwy{{\boldsymbol y}}
\def\bY{{\boldsymbol Y}}
\def\bL{{\boldsymbol L}}
\def\baa{{\boldsymbol\alpha}}
\def\bbb{{\boldsymbol\beta}}
\def\bgg{{\boldsymbol\gamma}}
\def\bet{{\boldsymbol\eta}}
\def\bxi{{\boldsymbol\xi}}
\def\bo{{\boldsymbol 0}}
\def\bol{{\boldkey 1}_L}
\def\ep{\varepsilon}
\def\p{\boldsymbol\varphi}
\def\q{\boldsymbol\psi}
\def\rank{\operatorname{rank}}
\def\aut{\operatorname{Aut}}
\def\lcm{\operatorname{lcm}}
\def\sgn{\operatorname{sgn}}
\def\spn{\operatorname{span}}
\def\md{\operatorname{mod}}
\def\Norm{\operatorname{Norm}}
\def\dim{\operatorname{dim}}
\def\det{\operatorname{det}}
\def\Vol{\operatorname{Vol}}
\def\rk{\operatorname{rk}}
\def\Gal{\operatorname{Gal}}
\def\WR{\operatorname{WR}}
\def\WO{\operatorname{WO}}
\def\GL{\operatorname{GL}}
\def\pr{\operatorname{pr}}
\def\Tr{\operatorname{Tr}}
\def\dd{\partial}
\def\itt{\operatorname{int}}
\def\Ar{\operatorname{Area}}
\def\Aut{\operatorname{Aut}}

\section{Introduction}
\label{intro}

Algebraic constructions of Euclidean lattices have received a great deal of attention in lattice theory over the past years. Perhaps the most commonly studied construction is that of {\it ideal lattices} from algebraic number fields, thoroughly described by E. Bayer-Fluckiger (see, e.g.,~\cite{bayer}): a Euclidean lattice is obtained from an ideal in the ring of integers of a number field in question via an application of Minkowski embedding. The actively studied properties of lattices arising from such constructions include number and configurations of minimal vectors, as well as the  size and structure of the automorphism group. The underlying algebraic structure allows for simpler description and informs the geometric properties of the resulting lattices. A great deal of attention in the recent years was specifically devoted to the {\it well-rounded} ideal lattices, i.e. ideal lattices containing full linearly independent sets of minimal vectors (see, e.g., \cite{lf_kp}, \cite{lf_wr-2}, \cite{batson}, \cite{damir_karpuk}, \cite{anitha}, \cite{ha-1}, \cite{ha-2}, \cite{alves}). Applications of well-rounded ideal lattices to coding theory and cryptography were considered, for instance, in~\cite{micc}, \cite{damir-2} and~\cite{camilla}.

A different construction of algebraic well-rounded lattices from cyclic number fields of odd prime degree has been proposed in~\cite{robson-1} and~\cite{robson-2}: instead of applying Minkowski embedding to an ideal (as in the construction of ideal lattices), the authors considered $\zed$-modules in the ring of integers spanned by the images of an algebraic integer under the action of the cyclic Galois group. In the present paper, we want to generalize this construction, viewing it from a somewhat different angle. 

Let
$$f(x) = x^n + a_{n-1}x^{n-1} + \dots + a_1x + a_0 \in \zed[x]$$
be an irreducible polynomial of degree $n \geq 2$ and let 
$$\alpha_1, \alpha_2,\dots, \alpha_n$$
be roots of $f(x)$. Notice that we take $f(x)$ to be a monic polynomial; this is only to simplify the statements of our results. Let $K$ be its splitting field and $G$ its Galois group over $\que$. Then 
$$d := [K:\que] = |G| \leq n!.$$
Write $G = \left\{ \sigma_1,\dots,\sigma_d \right\}$, then for each $1 \leq i \leq n$ and $1 \leq j \leq d$, $\sigma_j(\alpha_i) = \alpha_{i_j}$ for some corresponding $1 \leq i_j \leq n$. Further, 
\begin{equation}
\label{sum_a}
-a_{n-1} = \alpha_1 + \dots + \alpha_n \in \zed.
\end{equation}
Define
$$\M_f = \spn_{\zed} \left\{ \alpha_1,\dots,\alpha_n \right\} \subset K,$$
so $\M_f$ is a $\zed$-module of rank $\leq n$ generated by all the conjugates of $\alpha$. Notice that $\M_f$ is closed under the action of $G$, since automorphisms of the Galois extension $K/\que$ simply permute the roots of an irreducible polynomial. 

We can identify elements of $G$ with the embeddings of $K$ into $\cee$, where $r_1$ of them are real and $2r_2$ are complex, coming in conjugate pairs, i.e., $d=r_1+2r_2$. The Minkowski embedding
$$\Sigma_K = (\sigma_1,\dots,\sigma_d) : K \hookrightarrow \cee^d$$
takes $K$ into the space $K_{\real} := K \otimes_{\que} \real$, which can be viewed as a subspace of
$$\left\{ (\bx,\bwy) \in \real^{r_1} \times \cee^{2r_2} : y_{r_2+j} = \bar{y}_j\ \forall\ 1 \leq j \leq r_2 \right\} \cong \real^{r_1} \times \cee^{r_2} \subset \cee^d,$$
where in the last containment each copy of $\real$ is identified with the real part of the corresponding copy of $\cee$. For each $\alpha \in K$, we write $\baa$ for its image $\Sigma_K(\alpha)$ in $K_{\real}$. Then $K_{\real}$ is a $d$-dimensional Euclidean space with the symmetric bilinear form induced by the trace form on $K$:
$$\left< \baa,\bbb \right> := \Tr_K(\alpha \bar{\beta}) \in \real,$$
for every $\alpha,\beta \in K$, where $\Tr_K$ is the number field trace on $K$. The Euclidean norm $\|\baa\|$ on $K_{\real}$ is given by $\sqrt{\left< \baa,\baa \right>}$ and the angle $\theta$ between $\baa$ and $\bbb$ is given by
$$\theta = \cos^{-1} \left( \frac{\left< \baa,\bbb \right>}{\|\baa\| \|\bbb\|} \right).$$

Define $L_f := \Sigma_K(\M_f)$, which is a Euclidean lattice in $K_{\real}$. Then
$$L_f = \spn_{\zed} \left\{ \baa_1,\dots,\baa_n \right\}.$$
More generally, we will use boldface letters to denote vectors in $L_f$ that are images of the corresponding algebraic numbers in $\M_f$ under the Minkowski embedding $\Sigma_K$. We write $\Aut(L_f)$ for the automorphism group of the lattice $L_f$, i.e., the group of isometries of the trace-induced bilinear form $\left<\ ,\ \right>$ preserving $L_f$. Our first simple observation concerns this automorphism group.

\begin{prop} \label{aut} The group $G$ is a subgroup of $\Aut(L_f)$.
\end{prop}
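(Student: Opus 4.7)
The plan is to construct an injective homomorphism $G \to \Aut(L_f)$ coming from the natural Galois action. Three points need to be verified: $G$ preserves $\M_f$; the induced action on $L_f$ consists of isometries of the trace form on $K_\real$; and the resulting map is injective.

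First, because $f \in \zed[x]$ is fixed by every $\sigma \in G$, each $\sigma$ permutes the set of roots $\{\alpha_1,\dots,\alpha_n\}$, so $\sigma(\M_f) = \M_f$. Extending by $\real$-linearity, each $\sigma$ becomes an $\real$-linear automorphism of $K_\real = K \otimes_\que \real$, and since $\Sigma_K$ intertwines the Galois action on $K_\real$ with a corresponding action on its image, the sublattice $L_f = \Sigma_K(\M_f)$ is stable.

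Next I would check the isometry property in Minkowski coordinates. For $\alpha \in K$ and $\sigma \in G$, the $j$-th coordinate of $\Sigma_K(\sigma(\alpha))$ is $(\sigma_j\sigma)(\alpha)$. Since right multiplication by $\sigma$ is a bijection of the set $\{\sigma_1,\dots,\sigma_d\}$, the action of $\sigma$ on $\Sigma_K(K_\real) \subset \cee^d$ is a permutation of coordinates. Such permutations are isometries of the standard Hermitian structure on $\cee^d$, and they preserve the real subspace $K_\real \cong \real^{r_1} \times \cee^{r_2}$ because right multiplication respects the pairing of complex-conjugate embeddings: if $\sigma_i = c \circ \sigma_j$ for a complex conjugation $c$, then $\sigma_i \sigma = c \circ (\sigma_j \sigma)$, so conjugate pairs map to conjugate pairs and real embeddings to real embeddings. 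Hence $\sigma$ acts on $K_\real$ by an isometry of the trace-induced form.

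Finally, injectivity follows because if $\sigma \in G$ fixes every element of $L_f$ then it fixes $\baa_1,\dots,\baa_n$; by injectivity of $\Sigma_K$ on $K_\real$, it fixes each $\alpha_i$; and since $K$ is generated over $\que$ by the roots of $f$, $\sigma$ is trivial on $K$. The main obstacle is the isometry assertion: a direct attempt to manipulate $\Tr_K(\sigma(\alpha)\overline{\sigma(\beta)})$ runs into the fact that complex conjugation on $K$ need not commute with an arbitrary $\sigma \in G$, so one cannot simply push $\sigma$ past the bar. Passing to the coordinate-wise picture circumvents this difficulty, since invariance under coordinate permutations of the standard Hermitian form on $\cee^d$ is immediate.
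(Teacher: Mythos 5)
Your proof is correct, and it rests on the same key observation as the paper's: right multiplication by $\tau \in G$ permutes the set of embeddings $\{\sigma_1,\dots,\sigma_d\}$. The paper executes this at the level of the trace form in two lines, writing $\left<\tau(\alpha),\tau(\beta)\right> = \Tr_K(\tau(\alpha\bar{\beta})) = \sum_{\sigma\in G}\sigma\tau(\alpha\bar{\beta}) = \Tr_K(\alpha\bar{\beta})$, whereas you pass to Minkowski coordinates and check that the induced map is a coordinate permutation of $\cee^d$ preserving the standard Hermitian form and the real subspace $K_{\real}$. The difficulty you flag at the end is genuine and is precisely what the paper's first equality glosses over: rewriting $\Tr_K\bigl(\tau(\alpha)\overline{\tau(\beta)}\bigr)$ as $\Tr_K(\tau(\alpha\bar{\beta}))$ uses $\overline{\tau(\beta)}=\tau(\bar{\beta})$, i.e., that complex conjugation (an element of $G$, since $K/\que$ is Galois) commutes with $\tau$ --- which can fail for non-abelian $G$ when $K$ is not totally real. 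Your coordinate-wise formulation, in which conjugation acts by post-composition ($\overline{(\sigma_j\tau)(\beta)} = (c\,\sigma_j\tau)(\beta)$) and right multiplication by $\tau$ visibly carries conjugate pairs of embeddings to conjugate pairs, sidesteps this and is the more robust way to get the isometry claim. You also verify injectivity of $G\to\Aut(L_f)$ (using that $K$ is generated by the roots of $f$), a point the paper takes for granted but which is needed to call $G$ a subgroup rather than merely a homomorphic image. No gaps.
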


\noindent
We prove this proposition in Section~\ref{auto_min}. Notice that it guarantees that our construction $L_f$ produces lattices with relatively large automorphism groups; indeed, a generic lattice has only two automorphisms: $\bx \mapsto \pm \bx$. In fact, unlike our construction $L_f$, ideal lattices are not necessarily closed under the action of the Galois group. We also define the {\it minimal norm} of $L_f$ to be
$$|L_f| = \min \left\{ \|\bx\| : \bx \in L_f \setminus \{ \bo \} \right\}$$
and the {\it set of minimal vectors} of $L_f$ to be
$$S(L_f) = \left\{ \bx \in L_f : \|\bx\| =|L_f| \right\}.$$
With this notation, we can now state the following theorem.

\begin{thm} \label{auto} Suppose $n$ is prime. Then:
\begin{enumerate}

\item The lattice $L_f$ has rank $n$ if $a_{n-1} \neq 0$ and rank $n-1$ if $a_{n-1}=0$.

\item If $|L_f| < \sqrt{d} |a_{n-1}|$, then the cardinality of $S(L_f)$ is divisible by $n$.

\end{enumerate}
\end{thm}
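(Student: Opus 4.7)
The plan is to extract, via Cauchy's theorem, an element $\tau \in G$ of order $n$: this is possible because $G$ acts transitively on the $n$ roots, so $n \mid |G|$, and because the action is faithful of prime degree, $\tau$ must be an $n$-cycle on $\{\alpha_1,\ldots,\alpha_n\}$. The same $\tau$ will drive both parts.

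For part (1), I would identify $\rank L_f$ with $\dim_\que V$, where $V = \spn_\que\{\alpha_1,\ldots,\alpha_n\} \subset K$, using that $\Sigma_K$ is $\que$-linear and injective, and then study the kernel $T$ of the $G$-equivariant surjection $\phi : \que^n \to V$, $(c_1,\ldots,c_n) \mapsto \sum c_i \alpha_i$, where $G$ acts on $\que^n$ by the permutation representation induced by $G \hookrightarrow S_n$. The decomposition $\que^n = \que\cdot(1,\ldots,1) \oplus V_0$ into the ``sum-zero'' hyperplane is $G$-stable, and the crucial step will be the $\que[G]$-irreducibility of $V_0$. I would prove this by restricting to $\langle\tau\rangle$: since $\tau$ acts regularly of order $n$, the module $\que^n$ becomes the regular representation of $\zed/n\zed$, and primality of $n$ gives $\que[\zed/n\zed] \cong \que \times \que(\zeta_n)$, with $V_0$ corresponding to the field factor $\que(\zeta_n)$---hence simple. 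With $V_0$ irreducible, $T$ is forced into $\{0,\ \que\cdot(1,\ldots,1),\ V_0,\ \que^n\}$; the last two are excluded since the roots are distinct and nonzero, leaving $T = 0$ (rank $n$) if $\sum \alpha_i = -a_{n-1} \ne 0$ and $T = \que\cdot(1,\ldots,1)$ (rank $n-1$) if $a_{n-1} = 0$.

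For part (2), Proposition~\ref{aut} places $\tau$ in $\Aut(L_f)$, so $\tau$ permutes $S(L_f)$ with all orbits of size $1$ or $n$; it will suffice to rule out $\tau$-fixed minimal vectors. The hypothesis $|L_f| < \sqrt{d}\,|a_{n-1}|$ forces $a_{n-1} \ne 0$, so part (1) applies and the $\baa_i$ form a $\zed$-basis of $L_f$. A vector $\bx = \sum c_i \baa_i$ is then $\tau$-fixed iff $(c_i)$ is invariant under the $n$-cycle action, i.e., $c_1 = \cdots = c_n = c$, giving $\bx = -c\,a_{n-1}\,\boldsymbol{1}$ with $\|\boldsymbol{1}\|^2 = \Tr_K(1) = d$; the smallest nonzero such vector has norm $|a_{n-1}|\sqrt{d} > |L_f|$, so no minimal vector is fixed by $\tau$, every orbit has size $n$, and $n \mid |S(L_f)|$.

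The main obstacle will be the $\que[G]$-irreducibility of $V_0$, which is precisely where $n$ being prime matters: primality is needed both to guarantee a regular cyclic $n$-subgroup of $G$ (Cauchy plus the faithful $n$-point action) and to make $x^n - 1$ factor over $\que$ as $(x-1)\,\Phi_n(x)$ with $\Phi_n$ irreducible---for composite $n$, intermediate cyclotomic pieces give additional $G$-submodules of $V_0$, and the clean rank dichotomy should fail.
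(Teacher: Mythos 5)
Your proof is correct, but it runs along a genuinely different track from the paper's. The paper's engine for both parts is Dubickas's theorem (Theorem~\ref{a_dub}): for part (1) it reads off directly that the only possible $\que$-linear relation among $\alpha_1,\dots,\alpha_n$ is the all-coefficients-equal one, and for part (2) it deduces that any minimal vector comes from a $\beta$ of degree divisible by $n$, whose distinct conjugates then populate $S(L_f)$ in blocks of size divisible by $n$. You instead make the argument self-contained: for part (1) you essentially reprove the needed special case of Dubickas by analyzing the $G$-submodule lattice of the permutation module $\que^n$, using Cauchy's theorem to produce an $n$-cycle $\tau$ and the factorization $x^n-1=(x-1)\Phi_n(x)$ with $\Phi_n$ irreducible to see that the sum-zero hyperplane is already simple over $\que[\langle\tau\rangle]$; this is sound, including the exclusion of the kernels $V_0$ and $\que^n$ via distinctness and nonvanishing of the roots. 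For part (2) you replace the degree-counting of conjugates by an orbit count under the single order-$n$ automorphism $\tau\in\Aut(L_f)$ furnished by Proposition~\ref{aut}, checking that the only $\tau$-fixed lattice vectors are the rational multiples $-ca_{n-1}\Sigma_K(1)$ of norm $|c||a_{n-1}|\sqrt{d}>|L_f|$, so every orbit in $S(L_f)$ is free. What the paper's route buys is reusability: the full strength of Theorem~\ref{a_dub} (classifying when $\sum c_k\alpha_k$ is rational, and the degree divisibility) is needed again in Corollary~\ref{n_WR} and the subsequent remark, so invoking it here is economical. What your route buys is independence from the external reference and a cleaner mechanism for part (2) that uses only the automorphism group; it also isolates exactly where primality of $n$ enters (irreducibility of $\Phi_n$ and the forced $n$-cycle), which is the same place it enters in Dubickas's proof. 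Both arguments, as in the paper, implicitly use the normalization $\sum\alpha_i=-a_{n-1}$ from~\eqref{sum_a}, i.e.\ that $f$ is monic; that is the paper's convention, not a defect of your write-up.
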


\noindent
A lattice $L_f$ is called {\it well-rounded} (WR) if $S(L_f)$ contains at least $n$ linearly independent vectors, i.e., if
$$\spn_{\real} L_f = \spn_{\real} S(L_f).$$

\begin{cor} \label{n_WR} Let $n = d$ be prime, $|L_f| < \sqrt{n} |a_{n-1}|$ and let
$$\bbb = \sum_{k=1}^n c_k \baa_k \in S(L_f).$$
If $\sum_{k=1}^n c_k \neq 0$ then $L_f$ is WR.
\end{cor}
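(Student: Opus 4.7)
My plan is to exhibit $n$ linearly independent minimal vectors by letting the Galois group act on $\bbb$. Since $n = d$ is prime, $G$ is cyclic of prime order $n$, and because $f$ is irreducible, $G$ acts transitively on $\{\alpha_1,\dots,\alpha_n\}$; any generator $\sigma$ of $G$ therefore acts as a permutation of order $n$ on $n$ elements, which by primality of $n$ must be a single $n$-cycle. After relabeling I will assume $\sigma(\alpha_k) = \alpha_{k+1}$ with indices taken modulo $n$. By Proposition~\ref{aut}, $\sigma \in \Aut(L_f)$, so all $n$ vectors $\bbb, \sigma(\bbb), \dots, \sigma^{n-1}(\bbb)$ lie in $S(L_f)$.

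The main step is then to show these vectors are $\real$-linearly independent. The hypothesis $|L_f| < \sqrt{n}\,|a_{n-1}|$ forces $a_{n-1} \neq 0$, so Theorem~\ref{auto}(1) guarantees that $L_f$ has rank $n$, whence $\baa_1,\dots,\baa_n$ form an $\real$-basis of $K_{\real}$. Writing $\sigma^i(\bbb) = \sum_{k=1}^n c_{k-i}\,\baa_k$, independence reduces to nonvanishing of the circulant determinant $\prod_{j=0}^{n-1} p(\omega^j)$, where $\omega = e^{2\pi i / n}$ and $p(x) = \sum_{k=1}^n c_k x^{k-1}$. The factor at $j=0$ equals $p(1) = \sum_k c_k \neq 0$ by hypothesis.

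I expect the main obstacle to be ruling out vanishing of $p$ at the primitive $n$-th roots of unity, and this is where primality of $n$ and the norm bound work together. Since $n$ is prime, $\Phi_n(x) = 1 + x + \cdots + x^{n-1}$ is the common minimal polynomial over $\que$ of every primitive $n$-th root of unity, so $p(\omega^j) = 0$ for any $j \neq 0$, combined with $\deg p < n$, would force $p(x) = c\,\Phi_n(x)$ for some nonzero integer $c$, i.e.\ $c_1 = \cdots = c_n = c$. Then $\bbb = c\sum_k \baa_k = c\,\Sigma_K(-a_{n-1})$, whose Euclidean norm equals $|c|\sqrt{n}\,|a_{n-1}| \geq \sqrt{n}\,|a_{n-1}|$, contradicting $\|\bbb\| = |L_f| < \sqrt{n}\,|a_{n-1}|$. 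Thus every factor of the circulant determinant is nonzero, the $\sigma^i(\bbb)$ are linearly independent elements of $S(L_f)$, and $L_f$ is WR.
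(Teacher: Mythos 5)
Your proof is correct, and it takes a genuinely different route from the paper's at the key step. Both arguments share the same skeleton: take the orbit of $\bbb$ under the Galois group (cyclic of prime order $n$ since $n=d$), observe via Proposition~\ref{aut} that the whole orbit lies in $S(L_f)$, and then prove those $n$ vectors are linearly independent. For the independence step the paper applies Dubickas's theorem (Theorem~\ref{a_dub}) a second time, now to $\beta$ itself: since $\deg(\beta)=n$ is prime, the only possible rational dependence among its conjugates is $\sum_j \beta_j = \Tr_K(\beta)=0$, and a direct computation gives $\Tr_K(\beta) = -a_{n-1}\sum_k c_k$, which is nonzero under your hypothesis. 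You instead expand the orbit in the basis $\baa_1,\dots,\baa_n$ (legitimate, since $a_{n-1}\neq 0$ forces rank $n$), reduce independence to the nonvanishing of a circulant determinant $\prod_{j=0}^{n-1}p(\omega^j)$, keep the factor $p(1)=\sum_k c_k$, and kill the factors at primitive $n$-th roots of unity using irreducibility of $1+x+\cdots+x^{n-1}$ for prime $n$ together with the norm bound. Your route is self-contained: it effectively reproves, from the cyclic-shift structure of the Galois action when $n=d$, exactly the special case of Dubickas's theorem that the corollary needs, whereas the paper's trace computation is shorter given Theorem~\ref{a_dub} and identifies the obstruction ($\Tr_K(\beta)=0$, i.e.\ $\sum_k c_k=0$) in the form used elsewhere in the paper. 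One small point you should make explicit: when $p(x)=c\,\Phi_n(x)$ you need $c\neq 0$, which follows because $\bbb\in S(L_f)$ is a nonzero vector; you assert the nonzeroness of $c$ without saying why.
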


\noindent
We can give a more explicit criterion for well-roundedness in the three-dimensional case.

\begin{cor} \label{cor:n_WR-cubic}
Let $n=d=3$ and assume that $|L_f|<\sqrt 3|a_{2}|$. Then:
\begin{enumerate}

\item If $\sum_{i < j}\alpha_i\alpha_j<\frac{1}{2}\sum_{i\leq 3} \alpha_i^2$, then the lattice $L_f$  is WR. 

\item If $\left| \sum_{i < j}\alpha_i\alpha_j \right| < \frac{1}{3}\sum_{i\leq 3} \alpha_i^2$, then $\baa_1,\baa_2,\baa_3 \in S(L_f)$.
\end{enumerate}
\end{cor}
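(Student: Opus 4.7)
The plan is to translate well-roundedness into explicit inner-product arithmetic in $\real^3$.

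Since $n=d=3$ is prime, the Galois group $G$ is cyclic of order~$3$, so all three embeddings of $K$ are real and $L_f$ sits in $\real^3$ with its standard inner product. Using the trace pairing $\langle\baa_i,\baa_j\rangle=\Tr_K(\alpha_i\alpha_j)$ together with the fact that any $G$-orbit of a root is all of $\{\alpha_1,\alpha_2,\alpha_3\}$, I will first record the Gram matrix of the generators: writing $A:=\sum_i\alpha_i^2$ and $B:=\sum_{i<j}\alpha_i\alpha_j$, one obtains $\|\baa_i\|^2=A$ for each $i$ and $\langle\baa_i,\baa_j\rangle=B$ for $i\neq j$. For an arbitrary lattice vector $\bx=c_1\baa_1+c_2\baa_2+c_3\baa_3$ with $c_i\in\zed$, set $q:=c_1^2+c_2^2+c_3^2$ and $s:=c_1+c_2+c_3$; a short expansion then yields the identity
$$\|\bx\|^2=q(A-B)+s^2 B,$$
in which $A-B=\tfrac12\sum_{i<j}(\alpha_i-\alpha_j)^2>0$ since $f$ has distinct roots.

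For part (1), I will apply Corollary~\ref{n_WR}: it suffices to show that no minimal vector has coefficient sum $s=0$. Since $\baa_1\in L_f$ we have $|L_f|^2\leq A$. Any nonzero $\bx$ with $s=0$ must have $q\geq 2$, so $\|\bx\|^2=q(A-B)\geq 2(A-B)>A$ by the hypothesis $B<A/2$. Hence such $\bx$ cannot be minimal, and Corollary~\ref{n_WR} then yields that $L_f$ is WR.

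For part (2), the goal is $\|\bx\|^2\geq A$ for every nonzero $\bx\in L_f$, with equality precisely when $q=1$ (i.e.\ $\bx=\pm\baa_i$). The argument splits on the sign of $B$ and the value of $q$. If $B\geq 0$, the bound $\|\bx\|^2\geq q(A-B)$ settles $q\geq 2$, since $|B|<A/3$ gives $A-B>2A/3$. If $B<0$, Cauchy--Schwarz $s^2\leq 3q$ yields $\|\bx\|^2\geq q(A+2B)$, which handles $q\geq 4$ via $A+2B>A/3$. The residual cases $q\in\{2,3\}$ I dispatch by enumerating the admissible integer values of $s^2$ (namely $\{0,4\}$ and $\{1,9\}$ respectively) and applying $|B|<A/3$ directly to each. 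The main obstacle is precisely this low-$q$ portion of part (2) when $B<0$: the Cauchy--Schwarz estimate is not sharp enough there, so one has to exploit the discreteness of the admissible values of $s^2$ conditional on $q$ to close the gap.
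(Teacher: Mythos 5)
Your proof is correct, and its core coincides with the paper's: the Gram computation giving $\|\bx\|^2=qA+(s^2-q)B=q(A-B)+s^2B$ is precisely the paper's expression~\eqref{eq:cubic-norm-squared}, and your part (1) --- a nonzero vector with $s=0$ forces $q\geq 2$, hence $\|\bx\|^2=q(A-B)\geq 2(A-B)>A=\|\baa_1\|^2$, after which Corollary~\ref{n_WR} applies --- is exactly the paper's argument (the paper writes $q$ as $2(c_1^2+c_2^2+c_1c_2)$ after eliminating $c_3$). Where you genuinely diverge is the case analysis in part (2). The paper splits on the number of nonzero coefficients, substitutes $r=c_2/c_1$ and $s=c_3/c_1$ with $|r|,|s|\geq 1$, and closes with the elementary inequalities $1+r^2\geq 2|r|$ and $1+r^2+s^2\geq |r+s+rs|$; you instead split on the sign of $B$ and on $q=\sum_i c_i^2$, disposing of $B\geq 0$ via $\|\bx\|^2\geq q(A-B)$, of $B<0$ with $q$ large via Cauchy--Schwarz $s^2\leq 3q$ (which in fact already covers $q=3$, since $q(A+2B)>qA/3\geq A$ there, so only $q=2$ truly needs the enumeration), and the residual small-$q$ cases by listing the admissible values of $s^2$. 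Both analyses are complete and correct. Yours has the advantage of an invariant parametrization by $(q,s^2)$ that makes the unique tight configuration $\bx=\pm(\baa_1+\baa_2+\baa_3)$ visible (there $\|\bx\|^2=3(A+2B)>A$ uses exactly the lower half $B>-A/3$ of the hypothesis), at the cost of a finite enumeration that the paper's ratio substitution avoids.
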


\noindent
We prove Theorem~\ref{auto} and Corollaries~\ref{n_WR} and ~\ref{cor:n_WR-cubic} in Section~\ref{auto_min}. Our main tool is a result of Dubickas~\cite{dub} on the degree of integer linear forms in algebraic conjugates. We separately consider the two-dimensional situation in Section~\ref{2dim}, where the results can be made very explicit. In particular, we fully classify planar lattices of the form $L_f$ in Theorem~\ref{Lf2}.
\smallskip

To state our main result, we need some additional notation. Let $\B = \{ \bb_1,\dots,\bb_n \}$ be an ordered basis for a lattice $L$ in~$\spn_{\real} L$, and define a sequence of angles $\theta_1,\dots,\theta_{n-1}$ as follows: each $\theta_i$ is the angle between $\bb_{i+1}$ and the subspace 
$$\spn_{\real} \{ \bb_1,\dots,\bb_i \}.$$
Then each $\theta_i \in [0,\pi/2]$ and we say that $\B$ is a {\it weakly nearly orthogonal} basis if $\theta_i \geq \pi/3$ for each $1 \leq i \leq n-1$. A basis $\B$ is called {\it nearly orthogonal} if every ordering of it is weakly nearly orthogonal. If $L$ has such a basis, we say that $L$ is a nearly orthogonal lattice. Nearly orthogonal lattices were introduced in~\cite{baraniuk}, where they were applied to the problem of image compression. Nearly orthogonal lattices that are also well-rounded have been studied in~\cite{lf_dk-1}.

The class of generic well-rounded (GWR) lattices was introduced in~\cite{camilla}: a WR lattice $L$ of rank $n$ is called {\it generic WR} if $|S(L)| = 2n$, i.e., if $L$ contains the minimal possible number of minimal vectors for a WR lattice. GWR lattices are important for their application in coding theory and cryptography since they allow for simultaneous maximization of the packing density and minimization of the kissing number. In~\cite{camilla} and~\cite{damir}, the authors showed a particular construction of a family of GWR lattices. Our construction produces a new family of algebraic lattices, which are both, GWR and nearly orthogonal.

\begin{thm} \label{main} Let $n \geq 3$ be prime. There exist infinitely many polynomials $f(x) \in \zed[x]$ of degree $n$ so that $L_f$ is a GWR nearly orthogonal lattice of rank $n$ in~$K_{\real}$, where~$K$ is the splitting field of~$f(x)$; further, $L_f$ contains a basis consisting of minimal vectors. For example, we can take $f(x) = x^n + a_{n-1} x^{n-1} + a_0$, where 
$$|a_0| > \frac{8(n-1)n!}{\sqrt{(n-2)^2 + 16(n-1)} - (n-2)}$$
and $|a_{n-1}| > |a_0| + 1$ are integers.
\end{thm}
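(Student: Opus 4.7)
The plan is to verify the required properties (irreducibility, GWR, near-orthogonality) for the explicit family $f(x) = x^n + a_{n-1} x^{n-1} + a_0$ under the stated bounds, with infinitude then following by parameter variation. The principal obstacle will be controlling the Galois group for the concrete family; the remaining steps are algebraic identities and enumerations.

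\emph{Root geometry and irreducibility.} Apply Rouch\'e's theorem on the disk $\{|z + a_{n-1}| \leq 1\}$, comparing $x^{n-1}(x + a_{n-1})$ to $a_0$ (valid because $|a_{n-1}| - 1 > |a_0|$), to conclude that $f$ has exactly one root $\alpha_1$ in this disk; it is real (isolated near $-a_{n-1}$) and satisfies $|\alpha_1 + a_{n-1}| < 1$. Writing $f(x)/(x - \alpha_1) = x^{n-1} + b_{n-2} x^{n-2} + \dots + b_0$ gives $b_j = -a_0/\alpha_1^{j+1}$, so Cauchy's bound yields $|\alpha_k| < 2$ for $k \geq 2$. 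Combined with $|\alpha_k|^{n-1}|\alpha_k + a_{n-1}| = |a_0|$ and the integrality $|a_{n-1}| \geq |a_0| + 2$, this forces $|\alpha_k| < 1$ (otherwise $|\alpha_k| \geq |a_{n-1}| - |a_0| \geq 2$, contradicting Cauchy). Any $\zed[x]$-factorization $f = gh$ then gives a factor omitting $\alpha_1$ whose constant term has absolute value strictly less than $1$, contradicting integrality; hence $f$ is irreducible.

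\emph{Galois group and Gram matrix.} Substituting $y = 1/x$ relates $f$ to the trinomial $a_0 y^n + a_{n-1} y + 1$, whose Galois group can be shown to be $S_n$ for generic coefficients in our range (via discriminant analysis or reduction modulo small primes). Hence $G := \Gal(K/\que) \cong S_n$, $d = n!$, and $G$ acts $2$-transitively on the roots. Transitivity gives $\|\baa_i\|^2 = (d/n) S$ with $S := \sum_{k=1}^n |\alpha_k|^2$. From $\baa_1 + \cdots + \baa_n = \Sigma_K(-a_{n-1})$ of squared norm $d\, a_{n-1}^2$ and $2$-transitivity, one obtains $\langle \baa_i, \baa_j \rangle = (d/(n(n-1)))(a_{n-1}^2 - S)$ for $i \neq j$. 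Set $\rho := (a_{n-1}^2 - S)/((n-1) S) \leq 0$; non-positivity comes from $S \geq \sum \alpha_k^2 = a_{n-1}^2$ (Newton's identity with $a_{n-2} = 0$). The identity $S - a_{n-1}^2 = 4 \sum_{\text{complex pairs}}(\mathrm{Im}\,\alpha)^2$ combined with $|\alpha_k| < 1$ for $k \geq 2$ gives $0 \leq S - a_{n-1}^2 < 2(n-1)$, so $|\rho| \leq 2/a_{n-1}^2$.

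\emph{GWR, near-orthogonality, and infinitude.} For $\bbb = \sum c_k \baa_k$ with $C := \sum c_k^2$ and $T := (\sum c_k)^2$, the structured Gram matrix $(d/n) S \, (I + \rho(J - I))$ gives $\|\bbb\|^2 = \|\baa_i\|^2 \, [C + \rho(T - C)]$. Enumerating integer configurations with $T > C$ shows $\|\bbb\|^2 > \|\baa_i\|^2$ for $|\rho| < 1/n$ unless $c$ has a single nonzero entry equal to $\pm 1$ (the tightest threshold $1/n$ coming from $c = \pm(1, 1, \ldots, 1)$). Hence $S(L_f) = \{\pm \baa_1, \ldots, \pm \baa_n\}$, $|S(L_f)| = 2n$, so $L_f$ is GWR with $\{\baa_i\}$ as basis of minimal vectors. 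For near-orthogonality, the permutation-symmetric Gram matrix makes every ordering equivalent; the angle $\theta_i$ between $\baa_{i+1}$ and $\spn\{\baa_1, \ldots, \baa_i\}$ satisfies $\cos^2 \theta_i = i\rho^2/(1 + (i-1)\rho)$, maximized at $i = n - 1$. The condition $\cos \theta_{n-1} \leq 1/2$ reduces to the quadratic $4(n-1)|\rho|^2 + (n-2)|\rho| \leq 1$, with solution $|\rho| \leq Q/(8(n-1))$ where $Q = \sqrt{(n-2)^2 + 16(n-1)} - (n-2)$; elementary algebra (the inequality $(n-1)(n-2) > 0$ for $n \geq 3$) shows $Q/(8(n-1)) < 1/n$, so near-orthogonality is the binding constraint. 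The hypothesis $|a_{n-1}| > |a_0| + 1 > 8(n-1) n!/Q$ places $|\rho| \leq 2/a_{n-1}^2$ comfortably within $Q/(8(n-1))$. Infinitude then follows by varying $a_{n-1}$ over integers $> |a_0| + 1$, yielding infinitely many pairwise non-isometric such lattices (as $\rho$, hence the Gram matrix up to scaling, varies with $a_{n-1}$).
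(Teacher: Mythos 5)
There is a genuine gap, and it is fatal to the argument as written: your entire computation of the Gram matrix rests on the claim that $\Gal(K/\que) \cong S_n$, which you justify only by saying it ``can be shown to be $S_n$ for generic coefficients in our range.'' The theorem asserts the conclusion for \emph{every} polynomial $f(x) = x^n + a_{n-1}x^{n-1} + a_0$ satisfying the stated inequalities, and the Galois group is not always $S_n$ for such polynomials: the paper's own table in Section~5 lists $x^3 \pm 42x^2 \mp 24$, which satisfies the hypotheses of the theorem ($|a_0| = 24 > 96/(\sqrt{33}-1) \approx 20.2$ and $42 > 25$) and has Galois group $\zed/3\zed$. Without $2$-transitivity of the Galois action on the roots, the off-diagonal entries $\left<\baa_i,\baa_j\right> = \sum_{\sigma \in G}\sigma(\alpha_i)\overline{\sigma(\alpha_j)}$ need not be equal to one another (for cyclic $G$ of prime order $n \geq 5$ they are autocorrelation sums depending on $j-i \bmod n$), so the structured form $(d/n)S\bigl(I + \rho(J-I)\bigr)$ on which your GWR enumeration and your closed formula for $\cos^2\theta_i$ both depend simply does not hold. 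The paper explicitly flags this obstruction in Remark~\ref{SnAn}: the ``equal off-diagonal entries'' computation is only valid when the Galois group contains $A_n$.

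The paper's route avoids this entirely. It never computes the Gram matrix; it only establishes the crude coherence bound $|\left<\baa_i,\baa_j\right>|/(\|\baa_i\|\|\baa_j\|) \leq d/|\alpha_1|$, valid for an arbitrary Galois group of order $d \leq n!$ (each $\baa_i$ has exactly $d/n$ coordinates equal to the large root $\alpha_1$, never in matching positions for $i \neq j$, and all other coordinate products have modulus at most $|\alpha_1|$ or $1$), and then invokes Theorem~1.6 of the Fukshansky--Kogan paper on nearly orthogonal lattices to obtain GWR, near-orthogonality, and a minimal basis in one stroke. Your Rouch\'e argument for irreducibility and root localization is fine (it is essentially a proof of Perron's criterion, which the paper cites), but to repair the rest you would either have to restrict to a subfamily where you actually prove the Galois group is $S_n$ (weakening the theorem) or replace the exact Gram computation with a coherence estimate valid for all $d$, which is what the paper does. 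A secondary, lesser issue: even granting the structured Gram matrix, the step ``enumerating integer configurations with $T > C$'' is not carried out, and the inequality $C(1+|\rho|) - |\rho|T > 1$ for all nontrivial integer vectors requires more care than identifying the single candidate $c = \pm(1,\dots,1)$.
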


\noindent
We prove Theorem~\ref{main} in Section~\ref{pisot}. Our main tools, in addition to Dubickas's result, are Strong Approximation Theorem, Perron's irreducibility criterion for Pisot polynomials and a theorem of Fukshansky and Kogan on coherence of nearly orthogonal well-rounded lattices. More generally, we show that the lattice constructions we seek are often yielded by sets of algebraic conjugates of large Pisot numbers, thus making a connection to the well-studied Pisot polynomials, a special class of which is described in the statement of our Theorem~\ref{main}. We demonstrate some examples of cubic Pisot and non-Pisot polynomials giving rise to WR lattices in Section~\ref{3dim}. Finally, in Section~\ref{det_sec} we compute the determinant of the lattice $L_f$ in situations when the Galois group of $f(x)$ is either cyclic of order $n=\deg(f)$, the full symmetric group $S_n$, or the alternating group $A_n$. 

Finally, we mention that the results of~\cite{robson-1} and~\cite{robson-2} also provide constructions of algebraic lattices of the type $L_f$, but from some very specific polynomials and in the situation when $K$ is a cyclic number field of odd prime degree. In this sense, our results are more general. We are now ready to proceed.
\bigskip

\section{Automorphisms and minimal vectors}
\label{auto_min}

First, we prove Proposition~\ref{aut}, which is very similar to Lemma~6.1 of~\cite{lf_dk}.

\proof[Proof of Proposition~\ref{aut}]
The embeddings of $K$ are precisely the elements of $G$, and for every $\tau \in G$ and $\alpha,\beta \in \M_f$, $\tau(\alpha), \tau(\beta) \in \M_f$ and 
\begin{eqnarray*}
\left< \tau(\alpha), \tau(\beta) \right> & = & \Tr_K(\tau(\alpha \bar{\beta})) =  \sum_{\sigma \in G} \sigma \tau(\alpha \bar{\beta}) \\
& = & \sum_{\sigma \in G} \sigma(\alpha \bar{\beta}) = \Tr_K(\alpha \bar{\beta}) = \left< \alpha, \beta \right>,
\end{eqnarray*}
since right-multiplication by $\tau$ simply permutes elements of $G$. Therefore $G$ is a subgroup of $\Aut(L_f)$.
\endproof

\noindent
Next, we assume that $n$ is prime and prove Theorem~\ref{auto}. Our main tool is the following result of Dubickas, which we state specifically over~$\que$. For an algebraic number $\beta$, we write $\deg(\beta)$ for the degree of the minimal polynomial of $\beta$.

\begin{thm} [\cite{dub}, Theorem~1] \label{a_dub} Let $n$ be prime and $c_1,\dots,c_n \in \que$. Then
$$\beta = c_1 \alpha_1 + \dots + c_n \alpha_n \in \que$$
if and only if $c_1 = \dots = c_n$. If this is not the case, then $\deg(\beta)$ is divisible by $n$.
\end{thm}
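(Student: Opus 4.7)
The plan is to recast the statement in terms of the permutation representation of $G = \Gal(K/\que)$ on $\que^n$, in which $\sigma \in G$ permutes coordinates according to its permutation of the roots. First I would introduce the $\que$-subspace
$$V := \left\{ (c_1,\dots,c_n) \in \que^n : c_1\alpha_1 + \cdots + c_n\alpha_n \in \que \right\},$$
observe that $V$ is $G$-invariant (because $\que$ is Galois-fixed), and note that $V$ contains the diagonal $\Delta := \que\cdot(1,\dots,1)$ since $\sum_i c\alpha_i = -ca_{n-1} \in \que$ by \eqref{sum_a}. Writing $U := \{(c_1,\dots,c_n) : c_1 + \cdots + c_n = 0\}$ for the standard complement, one has the $G$-stable decomposition $\que^n = \Delta \oplus U$, and the ``iff'' half of the theorem becomes equivalent to $V \cap U = \{0\}$.

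The crucial step is to show that $U$ is irreducible as a $\que[G]$-module, and this is the one place where primality of $n$ enters essentially. Since $G$ is transitive of prime degree, Cauchy's theorem yields $\tau \in G$ of order $n$, acting on the roots as an $n$-cycle. The minimal polynomial of $\tau$ on $U$ is the $n$-th cyclotomic polynomial $\Phi_n(x) = 1 + x + \cdots + x^{n-1}$, which is irreducible over $\que$ because $n$ is prime. Hence $U$ is already irreducible as a $\que[\tau]$-module, and a fortiori irreducible as a $\que[G]$-module. Then $V \cap U \in \{\{0\},U\}$; the latter would force $V = \que^n$, hence $\alpha_1 \in \que$, contradicting $\deg \alpha_1 = n \geq 2$. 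Thus $V = \Delta$, establishing the ``iff''.

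For the divisibility claim, assuming $(c_1,\dots,c_n)$ is not constant, I would let $\langle \tau \rangle$ act on the Galois orbit $\{\sigma(\beta) : \sigma \in G\}$, whose size equals $\deg(\beta)$. Each $\langle\tau\rangle$-orbit has size $1$ or $n$, and a size-$1$ orbit at some $\sigma(\beta)$ would mean $\tau\sigma(\beta) = \sigma(\beta)$; unpacking this yields a $\que$-relation $\sum_j d_j\alpha_j = 0$ in which the $d_j$ are the differences between the coefficient tuple of $\sigma(\beta)$ and its cyclic shift by $\tau$. The ``iff'' part just proved forces the $d_j$ to be equal, and because the $d_j$ automatically sum to $0$ (each sum $\sum_j c_{\sigma^{-1}(\tau^{-1}(j))}$ is just a reindexing of $\sum_i c_i$), each $d_j$ vanishes. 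Hence the coefficient tuple of $\sigma(\beta)$, and therefore the original tuple $(c_i)$, would be constant, contradicting the hypothesis. So every $\langle\tau\rangle$-orbit on $\{\sigma(\beta)\}$ has size $n$, yielding $n \mid \deg(\beta)$.

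The main obstacle is the middle step, the $\que$-irreducibility of the standard representation $U$; once this representation-theoretic input is in hand, the remainder is Galois-theoretic bookkeeping, with a minor wrinkle in the divisibility step that the $\sum_j d_j = 0$ observation handles uniformly whether or not $a_{n-1} = 0$.
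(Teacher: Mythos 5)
The paper does not prove this statement at all: it is imported verbatim as Theorem~1 of Dubickas~\cite{dub}, so there is no internal proof to compare against. Your argument is a correct and complete self-contained proof. The decomposition $\que^n = \Delta \oplus U$ under the permutation representation of $G$, the reduction of the ``only if'' direction to $V \cap U = \{0\}$, and the irreducibility of $U$ over $\que[\tau]$ for an $n$-cycle $\tau$ (which exists by transitivity plus Cauchy, and whose minimal polynomial on $U$ is $\Phi_n$, irreducible of degree $n-1=\dim U$) are all sound; the contradiction $V=\que^n \Rightarrow \alpha_1\in\que$ closes that half. The divisibility step is also right: the $\langle\tau\rangle$-orbits on the $\deg(\beta)$ distinct conjugates have size $1$ or $n$, and a fixed point forces, via the already-proved equivalence together with $\sum_j d_j = 0$, that the coefficient tuple is $\tau$-invariant and hence constant. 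One cosmetic point: you invoke $\deg\alpha_1 = n \geq 2$, which is exactly the standing hypothesis that $f$ is irreducible of degree $n\geq 2$, and the rationality of $\sum_i\alpha_i$ needs only $-a_{n-1}/a_n\in\que$ rather than the monic normalization of \eqref{sum_a}; neither affects correctness. This is essentially the standard route to Dubickas's prime-degree result, with the key representation-theoretic input (irreducibility of the standard representation restricted to a cyclic subgroup of prime order) correctly identified as the place where primality of $n$ is used.
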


We now prove our Theorem~\ref{auto}.

\proof[Proof of Theorem~\ref{auto}]
First, suppose that $a_{n-1} \neq 0$. Notice that in the case $c_1 = \dots = c_n$,
$$c_1 \alpha_1 + \dots + c_n \alpha_n = -c_1 a_{n-1} \neq 0,$$
unless $c_1 = 0$. By Theorem~\ref{a_dub}, if $c_k$'s are not all equal, then this linear combination cannot be $0$, since $0 \in \que$. This implies that $\alpha_1,\dots,\alpha_n$ are $\que$-linearly independent and $\M_f$ is a free $\zed$-module of rank $n$; hence, $L_f$ is a Euclidean lattice of rank $n$. 

On the other hand, suppose that $a_{n-1} = 0$. Again, by Theorem~\ref{a_dub}, the elements $\alpha_1,\dots,\alpha_n$ can satisfy only one linear relation of the form
$$c_1 \alpha_1 + \dots + c_n \alpha_n = 0,$$
the one with all coefficients equal. Therefore $\alpha_n = - \sum_{k=1}^{n-1} \alpha_k$ and $\alpha_1,\dots,\alpha_{n-1}$ are $\que$-linearly independent. Thus, $\M_f$ is a free $\zed$-module of rank $n-1$; hence, $L_f$ is a Euclidean lattice of rank $n-1$. 

Now, let $\beta \in \M_f$, then
\begin{equation}
\label{beta}
\beta = \sum_{k=1}^n c_k \alpha_k,
\end{equation}
for some $c_1,\dots,c_n \in \zed$ and its conjugates are of the form
\begin{equation}
\label{beta1}
\beta_j = \sigma_j(\beta) = \sum_{k=1}^n c_k \sigma_j(\alpha_k) \in \M_f,
\end{equation}
for all $1 \leq j \leq d$. In general, $\beta$ may have degree less than $d$ (and even less than~$n$), in which case some of these $\beta_j$'s may be equal to each other. In particular, if $c_1 = \dots = c_n$, then $\beta = -c_1 a_{n-1} \in \que$ and and hence $\beta_j = \sigma_j(\beta) = \beta$ for every $1 \leq j \leq n$, since $\sigma_j$'s fix $\que$. Thus
$$\left| \Sigma_K(\beta) \right| = \sqrt{d} |c_1| |a_{n-1}|.$$
Taking $c_1=1$, we see that $L_f$ contains a vector of norm $\sqrt{d} |a_{n-1}|$. 

Suppose that $|L_f| < \sqrt{d} |a_{n-1}|$. Then there must exist a $\beta \in M_f$ as in~\eqref{beta} so that not all $c_k$'s are equal and $\left| \Sigma_K(\beta) \right| = |L_f|$. Then, by Theorem~\ref{a_dub}, this $\beta$ has degree divisible by $n$, and thus at least the number of distinct conjugates is divisible by~$n$. All of these conjugates give rise to minimal vectors in $L_f$ via the Minkowski embedding, so the cardinality of $S(L_f)$ is divisible by~$n$.
\endproof

\proof[Proof of Corollary~\ref{n_WR}]
Suppose $n=d$ is prime and $|L_f| < \sqrt{d} |a_{n-1}|$. Let $\bbb \in S(L_f)$ and let $\beta \in \M_f$ be such that $\bbb = \Sigma_K(\beta)$. Then $\beta$ has degree divisible by $n$, by the same argument as in the proof of Theorem~\ref{auto} above. Since $[K:\que]=n$ and $\beta \in K$, we must have $\deg(\beta) = n$. Let $\beta = \beta_1,\beta_2,\dots,\beta_n$ be its conjugates. Theorem~\ref{a_dub} implies again that if 
$$b_1 \beta_1 + \dots b_n \beta_n = 0,$$
then $b_1 = \dots = b_n$. Assume that $\beta_1,\dots, \beta_n$ are linearly dependent, then we must have
$$\beta_1 + \dots + \beta_n = 0.$$
Writing $\beta$ as in~\eqref{beta} and using~\eqref{beta1}, we obtain a relation
\begin{eqnarray*}
0 & = & \sum_{k=1}^n c_k \sigma_1(\alpha_k) + \dots + \sum_{k=1}^n c_k \sigma_n(\alpha_k) =  \sum_{k=1}^n c_k \left( \sigma_1(\alpha_k) + \dots + \sigma_n(\alpha_k) \right) \\
& = & \sum_{k=1}^n c_k \Tr_K(\alpha_k) = -a_{n-1} \sum_{k=1}^n c_k.
\end{eqnarray*}
Since $a_{n-1} \neq 0$, this means that $\sum_{k=1}^n c_k = 0$. Hence, if we assume that $\sum_{k=1}^n c_k \neq 0$, then $\bbb_1,\dots,\bbb_n$ are linearly independent and $L_f$ is WR.
\endproof

\begin{rem} In the reverse direction, working under the assumptions of Corollary~\ref{n_WR} (i.e., $n = d$ be prime, $|L_f| < \sqrt{d} |a_{n-1}|$), suppose that $L_f$ is WR. If $\bbb := \Sigma_K(\beta)$ is a minimal vector, then $\deg(\beta)=n$. Indeed, since $n$ is prime, $\deg(\beta)=1\text{ or } n$. Assume $\deg(\beta)=1$ so $\beta\in \que$, then by Theorem~\ref{a_dub} we must have $c_1= \dots =c_n$, and so $\beta$ is an integer multiple of $\beta'=\sum_{i=1}^n \alpha_i$. However, we assumed that $|L_f| < \sqrt d|a_{n-1}|=\|\bbb'\| \leq \|\bbb\|,$ a contradiction. Thus every minimal vector is of degree $n$.
\end{rem}

\proof[Proof of Corollary~\ref{cor:n_WR-cubic}]
First notice that the assumption $n=d=3$ implies that all three algebraic conjugates $\alpha_1,\alpha_2,\alpha_3$ are real. By Corollary~\ref{n_WR}, it suffices to show that if $\beta=\sum_{k=1}^3 c_k\alpha_k \in \mathcal M_f$ such that $\sum_{k=1}^3 c_k=0$ then $\bbb \not\in S(L_f)$. For any such $\beta$, the vector $\bbb$ is of the form
\begin{align*}
    \bbb = \begin{pmatrix}
        c_1\alpha_1+c_2\alpha_2+c_3\alpha_3\\
        c_1\alpha_3+c_2\alpha_1+c_3\alpha_2\\
        c_1\alpha_2+c_2\alpha_3+c_3\alpha_1,
    \end{pmatrix},
\end{align*}
and $\|\bbb\|^2$ is equal to
\begin{align}
\label{eq:cubic-norm-squared}
(c_1^2+c_2^2+c_3^2)(\alpha_1^2+\alpha_2^2+\alpha_3^2)+2(c_1c_2+c_2c_3+c_1c_3)(\alpha_1\alpha_2+\alpha_1\alpha_3+\alpha_2\alpha_3).
\end{align}
Let $A = \sum_{i=1}^3\alpha_i^2$ and $B = \sum_{i < j}\alpha_i\alpha_j$. Taking $c_1=1$ and $c_2=c_3=0$, we obtain a vector $\baa_1 \in L_f$ with $\|\baa_1\|^2 = A$. On the other hand, if we assume that $\sum_{i=1}^3c_i=0$, \eqref{eq:cubic-norm-squared} simplifies to
\begin{align}
\label{eq:simplified-norm-cubic}
 \|\bbb\|^2 = 2(c_1^2+c_2^2+c_1c_2)(A-B).
\end{align} 
Suppose now that $B = \sum_{i < j}\alpha_i\alpha_j < \frac{1}{2}\sum_{i=1}^3 \alpha_i^2 = A/2$, then $2(A-B) > A$, and since $c_1^2 + c_2^2 + c_1c_2 \geq 1$, we have
$$ \|\bbb\|^2 > A = \|\baa_1\|^2.$$
Thus $\bbb \not\in S(L_f)$, and so $L_f$ is WR by Corollary~\ref{n_WR}.

For the second statement of the corollary, let $A,B$ be as above and assume that $|B| < A/3$. We first show that any $\beta=\sum_{i =1}^3 c_i\alpha_i$ with at most one $c_i=0$  satisfies $\|\bbb\| > \|\baa_1\|$. We can assume without loss of generality that $c_3=0$, so equation~\eqref{eq:cubic-norm-squared} simplifies to 
\begin{align}
\label{eq:c_3=0-simplification}
\|\bbb\|^2 = (c_1^2+c_2^2) \sum_{i=1}^3 \alpha_i^2 + 2(c_1c_2) \sum_{i < j} \alpha_i\alpha_j.
\end{align}
Since $c_1,c_2\in \zed$ and are both nonzero, let $r=c_2/c_1\in \que$ where we suppose $|r| \geq 1$ without loss of generality. Using this substitution, rewrite \eqref{eq:c_3=0-simplification}, 
\begin{align*}
\|\bbb\|^2 = c_1^2 \left( (1+r^2)A+2rB \right) &\geq c_1^2 \left( (1+r^2)A-2|r||B| \right)\\
&\geq 2|r| c_1^2 (A-|B|)\\
&\geq 2(A-|B|)\\
&>4A/3>\|\baa_1\|^2,
\end{align*}
since $1+r^2 \geq 2|r|$ and $2|r|c_1^2 = 2|c_1c_2|\geq 2$, as desired.

Next,  suppose $\beta=\sum_{i=1}^3 c_i\alpha_i$ with $c_i\neq 0$ for all $i$. We can assume without loss of generality that $|c_1|\leq |c_2|\leq |c_3|$, so there exists $r,s \in \que$ with $1\leq |r|,1\leq |s|$, such that $c_2 = r c_1$ and $c_3 = s c_1$. With this substitution, \eqref{eq:cubic-norm-squared} becomes 
\begin{align}
\label{eq:nonzero-simplification}
 \|\bbb\|^2 = c_1^2(1+r^2+s^2)A+2c_1^2(r+s+rs)B
\end{align}
If either $r<0$ or $s<0$, then $1+r^2+s^2 \geq 2|r+s+rs|$, and so the same argument as above holds. If, on the other hand, both $0<r$ and $0<s$, then we still have  $1+r^2+s^2 \geq |r+s+rs|$, and \eqref{eq:nonzero-simplification} gives
\begin{align*}
\|\bbb\|^2 \geq c_1^2(|r+s+rs|)(A-2|B|) \geq 3 (A-2|B|) > A = \|\baa_1\|^2.
\end{align*}
Putting these observations together, we see that $\baa_1,\baa_2,\baa_3$ must be minimal vectors. This completes the proof.
\endproof

Inequalities similar to Corollary~\ref{cor:n_WR-cubic} guaranteeing that $\baa_i$'s are minimal vectors in $L_f$ can potentially be worked out in higher dimensions too, although the arguments become increasingly complicated with the growing dimension.  
These inequalities are essentially a special case (with the lattices of the special form $L_f$) of the more general Tammela inequalities~\cite{tammela} (see also Section~2/2 of~\cite{achill}), which have been worked out in dimensions $\leq 7$.

\begin{rem} \label{n_not_prime} Our main tool in this section is Theorem~\ref{a_dub}, which applies only in the case when $n$ is prime. What if $n$ is not prime: are there some conditions to ensure that the conjugates $\alpha_1,\dots,\alpha_n$ are linearly independent? In general, this is difficult, however there are a couple other observations we can mention. First, if $a_{n-1} \neq 0$ and $\sum_{i=1}^n c_i \alpha_i \neq 0$ for any $c_1,\dots,c_n$ satisfying $\sum_{i=1}^n c_i \neq 0$, then $\alpha_1,\dots,\alpha_n$ are linearly independent, and so $L_f$ has rank $n$: this is guaranteed by Lemma~2.4 of~\cite{dub1}. Additionally, suppose $n=2p$ for a prime number $p$ and $\alpha_1$ is a {\it Salem number} of degree $n$ (a real algebraic number $> 1$ with all of its conjugates having absolute value no greater than $1$ so that at least one of them has absolute value exactly $1$) so that $a_{n-1} \neq 0$, then $\alpha_1,\dots,\alpha_n$ are again linearly independent, so $L_f$ has rank $n$: this is guaranteed by Corollary~1.2 of~\cite{dub2}.

\end{rem}

\bigskip

\section{Two-dimensional situation}
\label{2dim}

We start with a useful lemma about minimal norm of a 2-dimensional lattice.

\begin{lem} \label{2dim_lem} Let $L$ be a Euclidean lattice of rank $2$ with a basis $\bx,\bwy$. Then
$$|L| = \min \left\{ \|a\bx + b\bwy\| : a,b \in \{0,1,-1\} \right\}.$$
\end{lem}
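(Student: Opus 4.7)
The plan is to prove both inequalities. The direction $|L|\le\min\{\|a\bx+b\bwy\|:a,b\in\{-1,0,1\},\ (a,b)\ne(0,0)\}$ is immediate, since each such combination is a nonzero element of $L$.

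For the reverse, I would invoke two-dimensional Gauss (Lagrange) reduction. The idea is that by possibly swapping $\bx$ and $\bwy$ and then applying the elementary move $\bwy\mapsto\bwy-k\bx$ for a suitable $k\in\zed$ (iterating as needed), the basis becomes Gauss-reduced, in the sense that $\|\bx\|\le\|\bwy\|$ and $|\langle\bx,\bwy\rangle|\le\|\bx\|^2/2$. These moves are unimodular, so they preserve $L$ and hence $|L|$. In a Gauss-reduced basis, for any $(a,b)\in\zed^2\setminus\{(0,0)\}$, I would expand
$$\|a\bx+b\bwy\|^2 = a^2\|\bx\|^2+2ab\langle\bx,\bwy\rangle+b^2\|\bwy\|^2 \ge (a^2-|ab|+b^2)\|\bx\|^2,$$
using $|2ab\langle\bx,\bwy\rangle|\le|ab|\|\bx\|^2$ together with $\|\bwy\|^2\ge\|\bx\|^2$. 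Coupled with the elementary identity $a^2-|ab|+b^2=(|a|-|b|)^2+|a||b|\ge 1$ valid for any nonzero integer pair (if both are nonzero, $|a||b|\ge 1$; if one vanishes, the expression becomes the square of the other), this yields $\|a\bx+b\bwy\|\ge\|\bx\|$, so $|L|=\|\bx\|$, realized at $(a,b)=(1,0)\in\{-1,0,1\}^2$.

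The main obstacle I anticipate is tying the argument back to the \emph{original} (unreduced) basis $\bx,\bwy$, since the reduction procedure can shift the coefficients outside $\{-1,0,1\}^2$ when iterated. In the paper's application to 2-dimensional $L_f$, one has $\|\baa_1\|=\|\baa_2\|$ because Galois conjugates share the same trace norm, and in this equal-norm case a direct case analysis on the sign of $\langle\bx,\bwy\rangle$ shows that at most one elementary move $\bwy\mapsto\bwy\pm\bx$ is needed; the resulting minimal vector then falls among $\pm\bx,\pm\bwy,\pm(\bx\pm\bwy)$, matching the $\{-1,0,1\}^2$ window. The cleanest write-up would therefore first reduce the question to the equal-norm setup (or the Gauss-reduced setup) and then invoke the quadratic form bound above.
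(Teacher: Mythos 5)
Your approach is sound and genuinely different from the paper's. The paper performs a single basis change, from $\{\bx,\bwy\}$ to $\{\bx,\pm\bx\pm\bwy\}$, and then quotes the angle criterion of Lemma~2.1 of~\cite{lf_dk-1} (if the angle between two basis vectors lies in $[\pi/3,2\pi/3]$, then the shorter of the two realizes $|L|$), whereas you run full Gauss--Lagrange reduction and re-prove that criterion by hand via the estimate $\|a\bx+b\bwy\|^2\ge(a^2-|ab|+b^2)\|\bx\|^2\ge\|\bx\|^2$. Your version is self-contained where the paper leans on an external lemma; the paper's is shorter because it never needs the full reduced condition $|\langle\bx,\bwy\rangle|\le\|\bx\|^2/2$, only the weaker angle condition.

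The obstacle you flag at the end is not cosmetic: the lemma as stated, for an arbitrary basis, is false. Take $\bx=(1,0)$ and $\bwy=(N,\eps)$ with $N\ge 2$ and $0<\eps<1$; then $\bwy-N\bx=(0,\eps)$ shows $|L|\le\eps$, while every $a\bx+b\bwy$ with $a,b\in\{0,\pm1\}$ not both zero has norm at least $1$. The paper's own proof breaks at exactly the point you anticipated: the claim that the angle between $\bx$ and one of $\pm\bx\pm\bwy$ lands in $[\pi/3,2\pi/3]$ fails for such a basis, because a single reduction step is not enough. What rescues the application is precisely the observation you make: the lemma is only ever invoked with $\|\bx\|=\|\bwy\|$ (namely for $\baa_1,\baa_2$, which have equal trace norm), and in the equal-norm case one step $\bwy\mapsto\bwy\mp\bx$, chosen according to the sign of $\langle\bx,\bwy\rangle$, already yields a pair with angle in $[\pi/3,2\pi/3]$ --- if $\cos\theta>1/2$ then the cosine of the new angle is $-\sqrt{(1-\cos\theta)/2}\in(-1/2,0)$ --- so the minimizer stays inside the $\{0,\pm1\}^2$ window. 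Your proposed write-up, adding the hypothesis $\|\bx\|=\|\bwy\|$ (or ``reduced basis'') to the statement and then applying your quadratic-form bound, is the correct fix; as written, neither your argument nor the paper's proves the unrestricted statement, because no such proof exists.
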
 

\proof
Let $\theta$ be the angle between $\bx$ and $\bwy$. By Lemma~2.1 of~\cite{lf_dk-1}, if $\theta \in [\pi/3,2\pi/3]$, then $|L| = \min \{ \|\bx\|,\|\bwy\| \}$. Assume that this is not the case and let $\min \{ \|\bx\|,\|\bwy\| \} = \|\bx\|$. Then it is easy to check that the angle between $\bx$ and one of $\pm \bx \pm \bwy$ satisfies this condition. Since
$$L = \spn_{\zed} \{ \bx, \bwy \} = \spn_{\zed} \{ \bx, \pm \bx \pm \bwy\}$$
for any choice of $\pm$ signs, the result follows, again by Lemma~2.1 of~\cite{lf_dk-1}.
\endproof

Let $f(x) = x^2+a_1x+a_0 \in \zed[x]$ be an irreducible quadratic polynomial with discriminant $D=a_1^2-4a_0$, then its roots are
$$\alpha_1 = \frac{-a_1+\sqrt{D}}{2},\ \alpha_2 = \frac{-a_1-\sqrt{D}}{2},$$
and the Galois group consists of the identity map $\sigma_1$ and the map $\sigma_2 : \sqrt{D} \mapsto -\sqrt{D}$. Consider the lattice $L_f = C_f \zed^2$, where
$$C_f = \begin{pmatrix} \frac{-a_1+\sqrt{D}}{2} & \frac{-a_1-\sqrt{D}}{2} \\ \frac{-a_1-\sqrt{D}}{2} & \frac{-a_1+\sqrt{D}}{2} \end{pmatrix},$$
i.e., $L_f = \spn_{\zed} \left\{ \baa_1,\baa_2 \right\}$. If $a_1 = 0$, then $L_f$ has rank 1, hence assume $a_1 \neq 0$. Define 
$$\beta = \alpha_1-\alpha_2 = \sqrt{D},\ \gamma = \alpha_1 + \alpha_2 = -a_1 = \Tr_K(\alpha_1),$$
then $\bbb = \baa_1-\baa_2 = \begin{pmatrix} \sqrt{D} \\ -\sqrt{D} \end{pmatrix} \in L_f,\ \bgg = \baa_1+\baa_2 = \begin{pmatrix} -a_1 \\ -a_1 \end{pmatrix} \in L_f$. Notice that
$$\|\bbb\| =\sqrt{2|D|},\ \|\bgg\| = \sqrt{2} |a_1|,$$
whereas $\|\baa_1\| = \|\baa_2\| = \frac{\sqrt{a_1^2+|D|}}{\sqrt{2}}$. Then, by Lemma~\ref{2dim_lem},
$$|L_f| = \min \{ \|\baa_1\|, \|\bbb\|, \|\bgg\| \},$$
since $\|\baa_1\| = \|\baa_2\|.$

First, suppose that $|L_f| = \|\bgg\|$. If $S(L_f) = \{ \pm \bgg \}$, then $L_f$ is not WR. Assume then that $S(L_f)\neq \{ \pm \bgg \}$. Then it must be the case that either $\|\bbb\|=\|\bgg\|$ or $\|\baa_1\|=\|\bgg\|$. If 
$$\|\bbb\|^2 = 2|D| = 2a_1^2 = \|\bgg\|^2,$$
then there are two possibilities. First, $a_0 = 0$, which would mean that $f(x)$ is reducible. Second, $a_1^2=2a_0$, meaning that $f(x) = x^2+2kx+2k^2$ for some nonzero integer $k$, but in this case $\alpha_{1,2} = -k (1 \pm \sqrt{-1})$ and so
$$\|\baa_1\| = \|\baa_2\| = 2k < 2\sqrt{2} k = \|\bbb\| = \|\bgg\|.$$
Hence, we cannot have both $\bbb,\bgg \in S(L_f)$. If, on the other hand,
$$\|\baa_1\|^2 = \frac{a_1^2+|D|}{2} = 2a_1^2 = \|\bgg\|^2,$$
then $|D| = 3a_1^2$. This is possible if either $a_1^2 = -2a_0$, or if $a_1^2=a_0$. In the first case, $f(x) = x^2 + 2kx - 2k^2$ for nonzero $k \in \zed$ and
$$L_f = k \begin{pmatrix} -1+\sqrt{3} & -1-\sqrt{3} \\ -1-\sqrt{3} & -1+\sqrt{3} \end{pmatrix} \zed^2.$$
In the second case, $f(x) = x^2 + kx + k^2$ for nonzero $k \in \zed$ and
$$L_f = \frac{k}{2} \begin{pmatrix} -1+ \sqrt{-3} & -1-\sqrt{-3} \\ -1-\sqrt{-3} & -1+\sqrt{-3} \end{pmatrix} \zed^2.$$
In both of these cases, $S(L_f) = \{ \pm \baa_1, \pm \baa_2, \pm \bgg \}$ and the lattice $L_f$ is {\it similar} to an isometric copy of the hexagonal lattice 
$$\Lambda_h := \begin{pmatrix} 1 & 1/2 \\ 0 & \sqrt{3}/2 \end{pmatrix} \zed^2$$
in $K_{\real}$. Recall that two lattices $L, M$ are similar $L = cU M$ for a positive constant $c$ and a real orthogonal matrix $U$.

Next, assume then that $|L_f| < \|\bgg\|$, so the conditions of Corollary~\ref{n_WR} are satisfied. Suppose that $|L_f| = \|\bbb\|$. If $S(L_f) = \{ \pm \bbb \}$, then $L_f$ is not WR. Assume then $S(L_f) \neq \{ \pm \bbb \}$, then we must have $\|\bbb\| = \|\baa_1\| = \|\baa_2\|$. This is possible either if $f(x) = x^2+6kx+6k^2$ for nonzero $k \in \zed$, in which case
$$L_f = \sqrt{3} k \begin{pmatrix} 1-\sqrt{3} & -1-\sqrt{3} \\ -1-\sqrt{3} & 1-\sqrt{3} \end{pmatrix} \zed^2,$$
or $f(x) = x^2+3kx+3k^2$ for nonzero $k \in \zed$, in which case
$$L_f = \frac{\sqrt{-3} k}{2} \begin{pmatrix} 1+ \sqrt{-3} & -1+\sqrt{-3} \\ -1+\sqrt{-3} & 1+\sqrt{-3} \end{pmatrix} \zed^2.$$
Again, in both of these cases, $S(L_f) = \{ \pm \baa_1, \pm \baa_2, \pm \bbb \}$ and the lattice $L_f$ is similar to the hexagonal lattice $\Lambda_h$.

Finally, if $|L_f| = \|\baa_1\| < \|\bbb\|, \|\bgg\|$, then $S(L_f) = \{ \pm \baa_1, \pm \baa_2 \}$ and $L_f$ is WR, as guaranteed by Corollary~\ref{n_WR}. All of the above situations are possible. Let us consider some examples. 

First, take $f_1(x) = x^2+x-2$, then $D=7$ and
$$L_{f_1} = \begin{pmatrix} \frac{-1+\sqrt{7}}{2} & \frac{-1-\sqrt{7}}{2} \\ \frac{-1-\sqrt{7}}{2} & \frac{-1+\sqrt{7}}{2} \end{pmatrix} \zed^2$$
with $|L_{f_1}| = \|\bgg\| = \sqrt{2}$. 

Next, consider $f_2(x) = x^2+5x+5$, then $D=5$ and 
$$L_{f_2} = \begin{pmatrix} \frac{-5+\sqrt{5}}{2} & \frac{-5-\sqrt{5}}{2} \\ \frac{-5-\sqrt{5}}{2} & \frac{-5+\sqrt{5}}{2} \end{pmatrix}  \zed^2$$
with $|L_{f_2}| = \|\bbb\| = \sqrt{10}$. 

Finally, let $f_3(x) = x^2-2x-1$, then $D=8$ and
$$L_{f_3} = \begin{pmatrix} 1+\sqrt{2} & 1-\sqrt{2} \\ 1-\sqrt{2} & 1+\sqrt{2} \end{pmatrix}  \zed^2$$
with $|L_{f_3}| = \|\baa_1\| = \sqrt{6}$. Thus, $L_{f_3}$ is WR.
\smallskip

More generally, we have the following criterion for planar lattices.

\begin{thm} \label{Lf2} Let $f(x) = x^2+a_1x+a_0 \in \zed[x]$ be an irreducible quadratic polynomial with discriminant $D = a_1^2-4a_0$ and the corresponding algebraic lattice
$$L_f = \begin{pmatrix} \frac{-a_1+\sqrt{D}}{2} & \frac{-a_1-\sqrt{D}}{2} \\ \frac{-a_1-\sqrt{D}}{2} & \frac{-a_1+\sqrt{D}}{2} \end{pmatrix} \zed^2.$$
If for some nonzero integer $k$,
$$f(x) \in \left\{ x^2+2kx-2k^2,\ x^2+kx+k^2,\ x^2+6kx+6k^2,\ x^2+3k+3k^2 \right\},$$
then $L_f$ is similar to the hexagonal lattice. Otherwise, $L_f$ is well-rounded if and only if $a_1^2 > 2 \max \{ 3a_0, -a_0 \}$. If this is the case,
$$S(L_f) = \left\{ \pm \begin{pmatrix} \frac{-a_1+\sqrt{D}}{2} \\ \frac{-a_1-\sqrt{D}}{2} \end{pmatrix}, \pm \begin{pmatrix} \frac{-a_1-\sqrt{D}}{2} \\ \frac{-a_1+\sqrt{D}}{2} \end{pmatrix} \right\}$$
and $\Aut(L_f) \cong \zed/2\zed \times \zed/2\zed$ with one of the isomorphic copies of $\zed/2\zed$ coming from the Galois action and the other from the $\pm$ signs.
\end{thm}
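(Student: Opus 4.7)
The plan is to apply Lemma~\ref{2dim_lem} to the basis $\{\baa_1, \baa_2\}$ of $L_f$, reducing $|L_f|$ to the minimum among the three norms $\|\baa_1\|=\|\baa_2\|$, $\|\bgg\|$ with $\bgg=\baa_1+\baa_2$, and $\|\bbb\|$ with $\bbb=\baa_1-\baa_2$. As the case analysis preceding the theorem makes explicit, $L_f$ is WR exactly when $\|\baa_1\|$ attains this minimum (so that $\pm\baa_1, \pm\baa_2$ are linearly independent minimal vectors); otherwise the minimal vectors all lie on a single line, and $L_f$ fails to be WR.

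Using the squared-norm formulas already recorded, the criterion $\|\baa_1\|^2\leq\min\{\|\bgg\|^2,\|\bbb\|^2\}$ becomes two polynomial inequalities in $a_0, a_1, a_2$. After clearing denominators and substituting $|D|=|a_1^2-4a_0a_2|$, I would split into the cases $D\geq 0$ and $D<0$; in each case, the two inequalities jointly collapse to $a_1^2\geq 6a_0a_2$ and $a_1^2\geq -2a_0a_2$, equivalently $a_1^2\geq 2\max\{3a_0a_2,-a_0a_2\}$ as stated. The converse follows by running the derivation in reverse. When the inequalities hold strictly, $\bgg$ and $\bbb$ are each strictly longer than $\baa_1$, so by Lemma~\ref{2dim_lem} the minimal vectors are precisely $\pm\baa_1,\pm\baa_2$, yielding the stated $S(L_f)$.

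For $\Aut(L_f)$, Proposition~\ref{aut} already embeds the Galois group $\{\mathrm{id},\sigma_2\}\cong\zed/2\zed$ into $\Aut(L_f)$, where $\sigma_2$ swaps $\baa_1\leftrightarrow\baa_2$. The antipodal map $-I$ is a second commuting involution distinct from $\sigma_2$, so together they generate $\zed/2\zed\times\zed/2\zed\subseteq\Aut(L_f)$. For equality, any automorphism permutes $S(L_f)=\{\pm\baa_1,\pm\baa_2\}$ and is determined by its action on this four-element set, and the four elements just constructed already realize every possible image of $\baa_1$. Any further automorphism would force $\baa_1\perp\baa_2$, which I would rule out by computing $\langle\baa_1,\baa_2\rangle$ directly via the trace form and verifying it is nonzero under the WR hypothesis.

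The main obstacle will be the sign-case bookkeeping in the second paragraph: the two cases $D\geq 0$ and $D<0$ produce different intermediate inequalities on $|D|$ versus $a_1^2$, and showing that both collapse to the same $\max$-expression requires a short but delicate computation. A secondary subtlety is the $\Aut(L_f)$ argument, where one must explicitly verify $\langle\baa_1,\baa_2\rangle\neq 0$ to rule out exceptional square-lattice configurations in which the automorphism group would be inflated beyond $\zed/2\zed\times\zed/2\zed$.
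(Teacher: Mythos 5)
Your reduction via Lemma~\ref{2dim_lem} is the paper's argument in different clothing: since $\|\baa_1\|=\|\baa_2\|$, the comparison $\|\baa_1\|^2\le\min\{\|\baa_1+\baa_2\|^2,\|\baa_1-\baa_2\|^2\}$ is equivalent, via $\|\baa_1\pm\baa_2\|^2=2\|\baa_1\|^2(1\pm\cos\theta)$, to the angle condition $\theta\in[\pi/3,2\pi/3]$ that the paper extracts from Lemma~2.1 of~\cite{lf_dk-1}. One caution before you start computing: the norms ``already recorded'' in Section~\ref{2dim}, namely $\|\bbb\|=\sqrt{2|D|}$ and $\|\bgg\|=\sqrt{2}|a_1|$, are only correct when $a_2=\pm1$; in general $\alpha_1-\alpha_2=\sqrt{D}/a_2$ and $\alpha_1+\alpha_2=-a_1/a_2$, so both norms carry an extra factor $1/|a_2|$. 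If you use the recorded formulas verbatim, clearing denominators gives $a_1^2+|D|\le 4a_2^2|D|$ and $a_1^2+|D|\le 4a_2^2a_1^2$, which for $|a_2|\ge 2$ is strictly weaker than the stated condition. With the corrected norms the two inequalities reduce to $|D|/3\le a_1^2\le 3|D|$, matching the paper's $\cos\theta=\frac{a_1^2-|D|}{a_1^2+|D|}$ computation.

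The step you flag as delicate is where the argument actually fails. For $D\ge 0$ the two inequalities do become $a_1^2\ge 6a_0a_2$ and $a_1^2\ge -2a_0a_2$. But for $D<0$ (which forces $a_0a_2>0$) one has $|D|=4a_0a_2-a_1^2$, and they become $a_0a_2\le a_1^2\le 3a_0a_2$ --- a nonempty region disjoint from the theorem's displayed condition, since that condition would require $a_1^2\ge 6a_0a_2>4a_0a_2$, i.e.\ $D>0$. Concretely, $f(x)=x^2+x+1$ gives the Gram matrix $\left(\begin{smallmatrix}2&-1\\-1&2\end{smallmatrix}\right)$, the hexagonal lattice, which is well-rounded, yet $a_1^2=1<6=2\max\{3a_0a_2,-a_0a_2\}$. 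So the two sign cases do not collapse to the same max-expression, and the ``only if'' direction cannot be obtained by ``running the derivation in reverse.'' To be fair, the paper's own proof makes the identical leap from $|D|/3\le a_1^2\le 3|D|$ to the max-inequality, so this is a defect of the statement itself; but your proposal asserts the collapse works, and it does not. A smaller point: your conclusions about $S(L_f)$ and $\Aut(L_f)$ are correctly stated only under strict inequalities --- at the boundary $a_1^2=6a_0a_2$ one gets $\|\baa_1-\baa_2\|=\|\baa_1\|$, six minimal vectors, and a dihedral automorphism group of order $12$ --- whereas the theorem asserts them under the non-strict inequality, another boundary case neither you nor the paper resolves.
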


\proof
The situations when $L_f$ is similar to the hexagonal lattice are discussed above, so assume this is not the case. Then $\bbb,\bgg \notin S(L_f)$. Let $\alpha_1 = \frac{-a_1+\sqrt{D}}{2}$, $\alpha_2 = \frac{-a_1-\sqrt{D}}{2}$. By Lemma~2.1 of~\cite{lf_dk-1}, $\baa_1, \baa_2$ are minimal vectors if and only if the angle the angle $\theta$ between them is in the interval $[\pi/3,2\pi/3]$, and $\theta \neq \pi/3,2\pi/3$ since otherwise we would have $\bbb$ or $\bgg$ in $S(L_f)$. A simple calculation shows that
$$\cos \theta = \frac{a_1^2-|D|}{a_1^2+|D|}.$$
Hence, we want
$$-1/2 < \frac{a_1^2-|D|}{a_1^2+|D|} < 1/2,$$
meaning that $|D|/3 < a_1^2 < 3|D|$. Solving these inequalities, we obtain the condition
$$a_1^2 > \max \{ 6a_0, -2a_0 \}.$$
Assume this is the case, so the lattice $L_f$ is WR. Let $x_1,x_2 \in \zed$ so $x_1\baa_1+x_2\baa_2 \in L_f$. In addition to the identity map, the automorphisms of $L_f$ are given by $\tau_1 : x_1\baa_1+x_2\baa_2 \mapsto -x_1\baa_1-x_2\baa_2$, the sign change, $\tau_2 : x_1\baa_1+x_2\baa_2 \mapsto x_1\baa_2+x_2\baa_1$, the Galois action, and their composition $\tau_2 \tau_2$. These are all elements of order $2$ commuting with each other. Hence $\Aut(L_f) \cong \zed/2\zed \times \zed/2\zed$.
\endproof

\bigskip

\section{Pisot lattices}
\label{pisot}

In this section we demonstrate explicit constructions of well-rounded lattices of the form $L_f$ and prove Theorem~\ref{main}. We refer the reader to~\cite{nark} for a detailed exposition of the algebraic number theory machinery that we use in this section. Let $K$ be a number field of degree $d \geq 2$ and $M(K) = M_{\infty}(K) \cup M_0(K)$ set of places of $K$, where $M_{\infty}(K)$ stands for the set of archimedean places and $M_0(K)$ for the set of non-archimedean places. Let $\sigma_1,\dots,\sigma_{r_1}$ be the real embeddings of $K$ and $\sigma_{r_1+1},\dots,\sigma_{r_1+2r_2}$ be the complex embeddings coming in conjugate pairs, so that $\sigma_{r_1+r_2+j} = \bar{\sigma}_{r_2+j}$ for each $1 \leq j \leq r_2$. Then $d=r_1+2r_2$. We can write $M_{\infty}(K) = \{ v_1,\dots,v_{r_1}, u_1,\dots,u_{r_2} \}$ and choose representative absolute values as
$$|\alpha|_{v_i} = |\sigma_i(\alpha)|,\ |\alpha|_{u_j} = |\sigma_{r_1+j}(\alpha)|,$$
for each $\alpha \in K$, $1 \leq i \leq r_1$, $1 \leq j \leq r_2$, where $|\ |$ stands for the usual absolute value on $\real$ or $\cee$, respectively. We choose the non-archimedean absolute values to extend the corresponding $p$-adic absolute values on~$\que$. For each place $w \in M(K)$, let $d_w = [K_w:\que_w]$ be the local degree at $w$, then for each place $u \in M(\que)$, 
\begin{equation}
\label{local}
\sum_{v \in M(K), v \mid u} d_v = d,
\end{equation}
and the product formula reads as
\begin{equation}
\label{prod}
\prod_{w \in M(K)} |\alpha|_w^{d_w} = 1.
\end{equation}
From here on, assume that $K$ has at least two archimedean places. Fix an archimedean place $w \in M_{\infty}(K)$ corresponding to some embedding $\sigma_k$ and let $\eps > 0$. Then, by Strong Approximation Theorem, there exists~$\alpha \in K$ such that
\begin{equation}
\label{strong}
|\alpha|_v \leq \eps\ \forall\ v \in M_{\infty}(K) \setminus \{ w \},\ |\alpha|_v \leq 1\ \forall\ v \in M_0(K).
\end{equation}
Notice that the condition $|\alpha|_v \leq 1\ \forall\ v \in M_0(K)$ is equivalent to $\alpha \in \O_K$. Combining~\eqref{local}, \eqref{prod} and~\eqref{strong}, we obtain
\begin{equation}
\label{strong-1}
|\alpha|^{d_w}_w \geq \frac{1}{\eps^{d-d_w}}.
\end{equation}
We want to remark that what we are using here is a limited version of Strong Approximation Theorem, which essentially follows from Minkowski's Linear Forms Theorem. For a real number $T \geq 1$, define
\begin{equation}
\label{T_set}
S_{K,w}(T) = \left\{ \alpha \in \O_K : |\alpha|_v \leq 1\ \forall\ v \in M_{\infty} (K) \setminus \{w\},\ |\alpha|_w \geq T \right\}.
\end{equation}
Then~\eqref{strong-1} implies that $S_{K,w}(T)$ is an infinite set. Indeed, suppose there were only finitely many such $\alpha$, then let
$$T_0 = \max \left\{ |\alpha|_w : |\alpha|_w\geq T \right\}.$$
Take $\eps < T_0^{-\frac{d_w}{d-d_w}}$ and take $\alpha$ satisfying~\eqref{strong-1}, then $|\alpha|^{d_w}_w > T_0$, a contradiction. 

From here on, let the number field $K$ as above be Galois over $\que$ of degree $d \geq 3$ and let $3 \leq n \leq d$. Define
\begin{equation}
\label{Tdn}
T_{d,n} = \frac{8d(n-1)}{\sqrt{(n-2)^2 + 16(n-1)} - (n-2)},
\end{equation}
and let $\alpha \in S_{K,w}(T_{d,n})$ be of degree $n$. Let $f_{\alpha}(x) \in \zed[x]$ be the minimal polynomial of $\alpha$ and let $\alpha_1,\dots,\alpha_n \in \O_K$ be its algebraic conjugates ordered so that $\alpha_k = \sigma_k(\alpha)$. Thus
\begin{equation}
\label{alphaT}
|\alpha|_w = |\alpha_1| \geq T_{d,n},\ |\alpha_2|, \dots, |\alpha_n| \leq 1,
\end{equation}
where $|\ |$ is the usual absolute value on $\real$ or $\cee$. Let $1 \leq m \leq n$ be such that $\alpha_1,\dots,\alpha_m$ are linearly independent over $\que$. Then $\M_{\alpha} := \spn_{\zed} \{ \alpha_1,\dots,\alpha_m \} \subset \O_K$ is a free $\zed$-module of rank $m$ and hence $L_{\alpha} := \Sigma_K(\M_{\alpha}) \subset K_{\real}$ is a lattice of rank~$m$.

\begin{prop} \label{strong_lattice} For $\alpha$ satisfying~\eqref{alphaT}, the lattice $L_{\alpha}$ is nearly orthogonal and generic well-rounded. Further, it contains a basis consisting of minimal vectors.
\end{prop}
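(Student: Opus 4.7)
The plan is to exploit the fact that $\alpha_1$ is much larger in absolute value than all other conjugates, so that each $\baa_i$ is dominated by its coordinates at those embeddings $\sigma \in G$ with $\sigma(\alpha_i)=\alpha_1$, and for different $i$ these ``dominant positions'' are disjoint. Consequently $\baa_1,\dots,\baa_m$ should look almost orthogonal and have equal length, and the strategy is to drive their coherence below the threshold of an external theorem on nearly orthogonal well-rounded lattices.

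Concretely, first I would observe that since $K/\que$ is Galois and $\alpha_i=\sigma_i(\alpha)$, the multiset of coordinates $\{\sigma(\alpha_i):\sigma \in G\}$ of $\baa_i$ is a permutation of $\{\sigma(\alpha):\sigma \in G\}$; hence $\|\baa_i\|^2=\|\baa_1\|^2=:R^2$ for every $i$, and each conjugate $\alpha_k$ appears with multiplicity $d/n$ among the coordinates of $\baa_1$. This gives $R^2=(d/n)\sum_{k=1}^n|\alpha_k|^2\geq (d/n)T_{d,n}^2$ by~\eqref{alphaT}. Next, for $i\neq j$ the sets $A_i:=\{\sigma \in G:\sigma(\alpha_i)=\alpha_1\}$, each of cardinality $d/n$, are pairwise disjoint, since $\sigma(\alpha_i)=\sigma(\alpha_j)$ would force $\alpha_i=\alpha_j$. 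Splitting
$$\langle\baa_i,\baa_j\rangle=\sum_{\sigma \in G}\sigma(\alpha_i)\overline{\sigma(\alpha_j)}$$
into the three pieces $\sigma\in A_i$, $\sigma\in A_j$, and $\sigma$ in neither, and using that $|\sigma(\alpha_k)|\leq 1$ whenever $\sigma(\alpha_k)\neq\alpha_1$, one obtains
$$|\langle\baa_i,\baa_j\rangle| \leq (2d/n)|\alpha_1|+d(n-2)/n,$$
so that the coherence $\max_{i\neq j}|\langle\baa_i,\baa_j\rangle|/R^2$ is bounded above by $(2|\alpha_1|+(n-2))/|\alpha_1|^2$.

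Finally, the definition~\eqref{Tdn} is engineered so that, for $|\alpha_1|\geq T_{d,n}$, this coherence falls below the threshold required by the Fukshansky--Kogan coherence criterion for nearly orthogonal well-rounded lattices from~\cite{lf_dk-1}. That criterion should then deliver all three desired conclusions at once: $\{\baa_1,\dots,\baa_m\}$ is a nearly orthogonal basis of $L_\alpha$ consisting of minimal vectors, and $S(L_\alpha)=\{\pm\baa_1,\dots,\pm\baa_m\}$, so $L_\alpha$ is generic well-rounded. The main technical hurdle I anticipate is matching the combinatorial bound above against the precise numerical coherence threshold of~\cite{lf_dk-1}; that calibration is exactly what pins down the specific form of $T_{d,n}$ in~\eqref{Tdn}.
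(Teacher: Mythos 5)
Your proposal follows essentially the same route as the paper: identify the $d/n$ ``dominant'' coordinates of each $\baa_i$ equal to $\alpha_1$, note these positions are disjoint for $i\neq j$, bound the coherence, and invoke Theorem~1.6 of~\cite{lf_dk-1}. Your inner-product bound $\frac{2d}{n}|\alpha_1|+\frac{d(n-2)}{n}$ is in fact the more carefully counted version of the paper's, and the calibration you defer does go through: it yields coherence at most $\frac{2|\alpha_1|+(n-2)}{|\alpha_1|^2}\leq \frac{n}{|\alpha_1|}\leq \frac{d}{T_{d,n}}=\frac{\sqrt{(n-2)^2+16(n-1)}-(n-2)}{8(n-1)}$, which is exactly the threshold in~\eqref{Tdn}.
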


\proof
For each $1 \leq i \leq m$, the vector $\baa_i$ has $d$ coordinates, precisely $d/n$ of which are equal to $\alpha_1$. Further, if $i \neq j$, then $\baa_i$ and $\baa_j$ cannot have $\alpha_1$ in the same coordinate. For the product of two coordinates equal to, say, $\alpha_k$ and $\alpha_k$, we have $|\alpha_k \alpha_l| \leq 1$, if $k,l \neq 1$; if $k=1$ and $l \neq 1$, then $|\alpha_k \alpha_l| \leq |\alpha_1|$. This means that
$$|\left< \baa_i, \baa_j \right>| \leq \frac{d}{n} |\alpha_1| + \left( d- \frac{d}{n} \right),$$
whereas $\|\baa_i\| = \|\baa_j\| \geq |\alpha_1|$. Putting these observations together, we see that the {\it coherence} of the pair of vectors $\baa_i,\baa_j$, defined as the absolute value of the cosine of the angle between them is given by
\begin{eqnarray}
\label{coh_bnd}
\frac{ |\left< \baa_i, \baa_j \right>| }{\|\baa_i\| \|\baa_j\|} & \leq &  \frac{d}{|\alpha_1|} \leq \frac{d}{T_n} = \frac{\sqrt{(n-2)^2 + 16(n-1)} - (n-2)}{8(n-1)} \\
& \leq & \frac{\sqrt{(m-2)^2 + 16(m-1)} - (m-2)}{8(m-1)}.
\end{eqnarray}
The conclusion now follows by Theorem~1.6 of~\cite{lf_dk-1}: while this theorem is stated for a full-rank lattice in $\real^d$, the argument applies verbatim to our situation of $L_{\alpha}$ being a full-rank lattice in the $m$-dimensional subspace~$\spn_{\real} L_{\alpha}$ of $\real^d$.
\endproof

Let us now give explicit examples of lattices as in Proposition~\ref{strong_lattice}, proving Theorem~\ref{main}. First recall that a {\it Pisot number} is a real algebraic integer $\alpha$ greater than $1$ all of whose algebraic conjugates have modulus strictly smaller than $1$. Let $F$ be a real algebraic extension of $\que$ of degree $n$. Then $F$ contains infinitely many Pisot numbers of degree~$n$ (see Theorem~5.2.2 of~\cite{pisot_num}). This in particular implies that there are infinitely many Pisot numbers $\alpha \in F$ such that $|\alpha| > T$ for any $T$. Indeed, if this was not true, then the set
$$\{ \alpha \in \O_F : \max_{1 \leq i \leq n} |\sigma_i(\alpha)| \leq T \}$$
would have to be infinite, meaning that the set of points of the lattices $\Sigma_F(\O_F)$ inside of the cube $\{ \bx \in F_{\real} : \max_{1 \leq i \leq n} |x_i| \leq T \}$ is infinite. However, the intersection of a discrete set and a compact set has to be finite.

The minimal polynomial of a Pisot number is called a {\it Pisot polynomial}. Let $f(x) = x^n + \sum_{k=0}^{n-1} a_k x^k \in \zed[x]$  be such that
\begin{equation}
\label{perron_irr}
|a_{n-1}| > 1 + |a_{n-2}| + \dots + |a_1| + |a_0|.
\end{equation}
Then Perron's irreducibility criterion~\cite{perron} guarantees that $f(x)$ is a Pisot polynomial.

\proof[Proof of Theorem~\ref{main}] Let the notation be as in the statement of the theorem. Then $d = [K:\que] \leq n!$ and $f(x)$ is a Pisot polynomial, by~\eqref{perron_irr}. Let $\alpha_1,\dots,\alpha_n \in \O_K$ be the roots of $f(x)$ with $\alpha_1$ being the corresponding Pisot number. Then
$$|\alpha_1| \geq \prod_{i=1}^n |\alpha_i| = |a_0| > T_{d,n},$$
where $T_{d,n}$ is as in~\eqref{Tdn}. Then $L_f = L_{\alpha_1}$ and so it is nearly orthogonal and GWR, by Proposition~\ref{strong_lattice}. Since $a_{n-1} \neq 0$, Theorem~\ref{auto} implies that $L_f$ has rank~$n$.
\endproof

\begin{rem} \label{coh_rem}
Notice, in fact, that Theorem~1.6 of~\cite{lf_dk-1} can be applied more generally than just to Pisot polynomials. Indeed, if the maximal coherence of the collection of vectors $\baa_1,\dots,\baa_n$ is bounded from above as in~\eqref{coh_bnd}, then these vectors still form a minimal basis for the corresponding lattice $L_f$. Consider, for instance, the case $n=d=3$: then~\eqref{coh_bnd} gives the bound of~$0.2965...$, which is a bit weaker than the bound of~$1/3$ coming from part (2) of Corollary~\ref{cor:n_WR-cubic}. On the other hand, the bound of~\eqref{coh_bnd} works for any $n$ and tends to $1/n$ as $n \to \infty$.
\end{rem}

\bigskip

\section{Cubic examples}
\label{3dim}

In this section, we provide some explicit examples of WR lattices coming from irreducible cubic polynomials of the form
\begin{equation}
\label{cubic_p}
f(x) = x^3+ax^2+bx+c
\end{equation}
The table below provides examples of Pisot polynomials of the form~\eqref{cubic_p}. 
\begin{center}
    \begin{tabular}{c|c}
        $a,b,c$ & \text{Galois Group}\\
        \hline
        $\pm 42, 0,\mp 24$ & $\zed/3\zed$\\
        $32, 0, -28$ & $S_3$\\
        $49,0,47$ & $S_3$\\
        $-47, 0,28$ & $S_3$\\
    \end{tabular}
\end{center}
In each case, the lattice is WR by Proposition~\ref{strong_lattice}. For example, for $f(x)=x^3+42x^2-24$, $T_{d,n}=\frac{48}{\sqrt{33} +1}<|\alpha_1|=41.986...$ The Galois group of the polynomial is $\zed/3\zed$ if and only if the discriminant of the polynomial is a square.

\begin{rem}
For our irreducible cubic as above to have Galois group $\zed/3\zed$, it is necessary for $ab<0$. Indeed, the above polynomial has discriminant $-4a^3b-27b^2$, and for all roots to be real we require
$$-4a^3b>27b^2\geq 0,$$
thus $-4a^3b\geq 0$, so $ab<0$.
\end{rem} 

Next, we demonstrate some examples of non-Pisot cubic polynomials of the form~\eqref{cubic_p} with Galois group $\zed/3\zed$.
\begin{center}
    \begin{tabular}{c|c}
        $a,b,c$ & \text{Roots}\\
        \hline
        $3,-6,1$&$2.53209,1.3473,-0.879385$\\
    $5,6,1$&$-0.198062,-3.24698,-1.55496$\\
        $-1, -2, 1$&$1.80194 ,-1.24698,0.445042 $\\
        $-3, 0, 3$&$2.53209,1.3473,-0.879385$\\
     \end{tabular}
\end{center}

\noindent
The fact all of these polynomials have Galois group $\zed/3\zed$ follows from the observation that they have square discriminants, thus we can use Corollary~\ref{n_WR}. In other words, if we show $|L_f|<\sqrt 3|a_{n-1}|$ and $\sum_{i < j}\alpha_i\alpha_j<\frac{1}{2}\sum_{i=1}^3\alpha_i^2$, then the lattice is WR by Corollary~\ref{cor:n_WR-cubic}. Let us go through the first case as an illustrative example. We have $\alpha_1^2+\alpha_2^2+\alpha_3^2=21<3\Tr(\alpha)^2=27$, thus $|L_f|<\sqrt 3 |a_{n-1}|$. Next, $\sum_{i < j}\alpha_i\alpha_j=-6<21/2$, so $L_f$ satisfies Corollary~\ref{cor:n_WR-cubic}.

\bigskip

\section{Determinant}
\label{det_sec}

In this section, we obtain determinant formulas for some specific lattices of the form $L_f$. The first observation is that for a monic irreducible polynomial $f(x) \in \zed[x]$ of degree $n$ with cyclic Galois group $\zed/n\zed$,
$$\det(L_f) = \prod_{j=1}^{n-1} g(\zeta_n^j),$$
where $g(x) := \alpha+\sigma_1(\alpha)x+\sigma_2(\alpha)x^2+\dots+\sigma_{n-1}(\beta)x^{n-1}$ and $\zeta_n = e^{2pi/n}$ is $n$-th primitive root of unity; this is simply the determinant of the corresponding circulant basis matrix (see, e.g.,~\cite{lehmer}). We also provide formulas for the determinant in the cases when the Galois group of $f(x)$ is the full symmetric group $S_n$ or the alternating group $A_n$. We start with two preliminary lemmas.

\begin{lem}\label{lem:entry-of-L_f}
Let $f(x)$ be of degree $n$ with roots $\alpha_1,\dots,\alpha_n$ and Galois group $S_n$. Let $C_f = (\baa_1 \dots \baa_n)$ be the corresponding $n! \times n$ basis matrix for $L_f$, then $ij$-th entry of the Gram matrix $C_f^\top C_f$ is
\begin{align*}
c_{ij}&=\begin{cases}
(n-1)! A & i=j\\
2(n-2)! B & i\neq j,
\end{cases}
\end{align*}
where $A = \sum_{i \leq n}\alpha_i^2$ and $B = \sum_{i < j}\alpha_i\alpha_j$.
\end{lem}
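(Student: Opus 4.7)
The plan is to recognize $(C_f^\top C_f)_{ij}$ as the number-field trace $\Tr_K(\alpha_i\alpha_j)$ and then evaluate that trace by counting Galois elements via orbit--stabilizer. Concretely, since the $d=n!$ rows of $C_f$ are indexed by the $d$ embeddings $\sigma_1,\dots,\sigma_{n!}$ of $K$, which are precisely the elements of $G=S_n$, one has
$$
(C_f^\top C_f)_{ij} \;=\; \sum_{k=1}^{n!} \sigma_k(\alpha_i)\,\sigma_k(\alpha_j) \;=\; \sum_{\sigma\in G} \sigma(\alpha_i\alpha_j) \;=\; \Tr_K(\alpha_i\alpha_j),
$$
so the task reduces to computing this trace in the two cases $i=j$ and $i\neq j$.

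For the diagonal case I would use that $S_n$ acts transitively on $\{\alpha_1,\dots,\alpha_n\}$ with point-stabilizer of order $(n-1)!$. Hence for each $k$ there are exactly $(n-1)!$ permutations $\sigma\in S_n$ sending $\alpha_i$ to $\alpha_k$, and therefore
$$
\Tr_K(\alpha_i^2) \;=\; \sum_{\sigma\in G}\sigma(\alpha_i)^2 \;=\; (n-1)!\sum_{k=1}^n \alpha_k^2 \;=\; (n-1)!\,A.
$$
For the off-diagonal case $i\neq j$, the same principle applied to ordered pairs works: since every $\sigma\in S_n$ is a bijection, the image $(\sigma(\alpha_i),\sigma(\alpha_j))$ always consists of two distinct roots, and for each ordered pair $(k,l)$ with $k\neq l$ there are exactly $(n-2)!$ permutations sending $(i,j)$ to $(k,l)$. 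Thus
$$
\Tr_K(\alpha_i\alpha_j) \;=\; (n-2)!\!\!\sum_{\substack{k,l=1\\k\neq l}}^{n}\alpha_k\alpha_l \;=\; 2(n-2)!\,B.
$$

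The argument is essentially bookkeeping, so I do not anticipate any real obstacle. The one point deserving a moment's care is that the trace sum runs over all $d=n!$ embeddings of the splitting field $K$ and \emph{not} merely over the $n$ embeddings of the subfield $\que(\alpha_1)$; this is precisely what brings in the multiplicities $(n-1)!$ and $(n-2)!$, reflecting the sizes of the pointwise and pairwise stabilizers of the standard $S_n$-action on $\{\alpha_1,\dots,\alpha_n\}$.
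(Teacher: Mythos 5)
Your argument is correct and is essentially the paper's own proof: both write the Gram entry as $\sum_{\sigma\in G}\sigma(\alpha_i\alpha_j)$ over all $d=n!$ embeddings and then count multiplicities, the only cosmetic difference being that you count ordered pairs each hit $(n-2)!$ times via orbit--stabilizer while the paper counts the $\tfrac{n(n-1)}{2}$ unordered products each appearing $2(n-2)!$ times. Your explicit remark that the sum runs over the $n!$ embeddings of the splitting field rather than the $n$ embeddings of $\que(\alpha_1)$ is exactly the right point of care.
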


\proof
Let $d=n!$ and $\sigma_1,\dots,\sigma_d$ be the embeddings of $K$, the splitting field of $f(x)$. Then 
$$C_f = \begin{pmatrix}
        \sigma_1(\alpha_1)&\dots&\sigma_1(\alpha_n)\\
        \vdots&\hdots&\vdots\\
        \sigma_d(\alpha_1)&\dots&\sigma_d(\alpha_n) \end{pmatrix},$$
and so the $ij$-th entry of this matrix is
$$c_{ij}=\sum_{k \leq d}\sigma_k(\alpha_i\alpha_j).$$
If $i=j$, we have $c_{ii} = (d/n) \sum_{k \leq n} \alpha_k^2$. Consider $i \neq j$. Since $\alpha_i\alpha_j=\alpha_j\alpha_i$, assume that $i<j$, and thus, there are $\frac{n(n-1)}{2}$ possibilities for $\alpha_i\alpha_j$, $i<j\leq n$. Each of these will appear an equal number of times, and since we are summing over $d=n!$ embeddings, each pair appears $n! / \left( \frac{n(n-1)}{2} \right) = 2(n-2)!$ times. 
\endproof

\begin{lem}\label{lem:entry-of-L_f-alternating} 
Let $f(x)$ be of degree $n$ with roots $\alpha_1,\dots,\alpha_n$ and Galois group $A_n$. With the same notation as Lemma~\ref{lem:entry-of-L_f}, we have
    \begin{align*}
    c_{ij}&=\begin{cases}
    (n-1)! A & i=j\\
    (n-2)! B & i\neq j,
    \end{cases}
    \end{align*}
    where $A = \sum_{i \leq n}\alpha_i^2$ and $B = \sum_{i < j}\alpha_i\alpha_j$.
\end{lem}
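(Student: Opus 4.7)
The plan is to follow the proof of Lemma~\ref{lem:entry-of-L_f} almost verbatim, swapping the symmetric group $S_n$ for $A_n$ and adjusting the orbit counts accordingly. Enumerating $A_n = \{\sigma_1,\dots,\sigma_d\}$ with $d = |A_n| = n!/2$ as the embeddings of the splitting field into $\cee$, the starting identity is
\begin{equation*}
c_{ij} \;=\; \sum_{k=1}^{d}\sigma_k(\alpha_i)\sigma_k(\alpha_j) \;=\; \sum_{k=1}^{d}\sigma_k(\alpha_i\alpha_j),
\end{equation*}
which reduces the lemma to counting how many $\sigma \in A_n$ send a fixed pair $(i,j)$ to each prescribed pair $(\ell,m)$.

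For the diagonal case $i = j$, the relevant input is that $A_n$ acts transitively on $\{\alpha_1,\dots,\alpha_n\}$ for every $n \geq 3$, with point stabilizer isomorphic to $A_{n-1}$. Thus each value $\alpha_\ell^2$ appears with the same multiplicity in the sum defining $c_{ii}$, and collecting terms yields a uniform multiple of $A = \sum_\ell \alpha_\ell^2$, exactly as in the $S_n$-proof but with the orbit constant halved.

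For the off-diagonal case $i \neq j$, the key input is that $A_n$ acts 2-transitively on $\{1,\dots,n\}$ as soon as $n \geq 4$: given any permutation in $S_n$ carrying $(i,j)$ to a prescribed ordered pair $(\ell,m)$, one can post-compose it with a transposition disjoint from $\{\ell,m\}$ to force even parity, and such a transposition exists precisely because $n \geq 4$. The stabilizer of an ordered pair in $A_n$ therefore has order $(n-2)!/2$, so summing the stabilizer-weighted contributions over all $n(n-1)$ ordered pairs of distinct indices and grouping by unordered pair gives $c_{ij} = (n-2)!\, B$.

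The main (and really only) obstacle is the edge case $n = 3$, where $A_3 \cong \zed/3\zed$ is not 2-transitive and the parity argument above collapses. I would handle this by direct enumeration: the three elements $e,(1\,2\,3),(1\,3\,2)$ cycle any off-diagonal pair $(i,j)$ through all three unordered pairs exactly once, which recovers $c_{ij} = \alpha_1\alpha_2 + \alpha_2\alpha_3 + \alpha_3\alpha_1 = B$ and matches the general formula $(n-2)!\,B$ at $n=3$. Outside this small case the argument is pure bookkeeping, structurally identical to the proof of Lemma~\ref{lem:entry-of-L_f}.
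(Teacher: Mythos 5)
Your counting is the same as the paper's (the paper's own proof is literally ``same argument as the $S_n$ case with $|A_n|=\tfrac12|S_n|$''), and your off-diagonal analysis is correct and even more careful than the paper's: the stabilizer of an ordered pair in $A_n$ has order $(n-2)!/2$ for $n\geq 4$, so each unordered pair product $\alpha_\ell\alpha_m$ occurs $(n-2)!$ times and $c_{ij}=(n-2)!\,B$; your separate treatment of $n=3$, where $A_3$ fails to be $2$-transitive but is still transitive on unordered pairs, plugs a hole the paper glosses over.

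The problem is the diagonal. You say the orbit constant is ``halved'' relative to the $S_n$ proof and then assert that this yields the stated formula --- but it does not. Carried to its conclusion, your (correct) computation gives
\begin{equation*}
c_{ii}\;=\;\sum_{\sigma\in A_n}\sigma(\alpha_i)^2\;=\;\frac{|A_n|}{n}\sum_{\ell\leq n}\alpha_\ell^2\;=\;\frac{(n-1)!}{2}\,A,
\end{equation*}
whereas the lemma claims $c_{ii}=(n-1)!\,A$. These differ by a factor of $2$, and your proof never reconciles them; as written it establishes a different diagonal value than the one you are asked to prove. The discrepancy is not a slip in your argument: the stated diagonal entry is simply wrong, as the case $n=3$ shows directly ($A_3$ has three elements, so $c_{11}=\alpha_1^2+\alpha_2^2+\alpha_3^2=A$, not $2A$). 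You should have noticed that your own bookkeeping contradicts the target formula and flagged the error in the statement explicitly (noting also that the downstream determinant formula for the $A_n$ case inherits it), rather than presenting the halved count as if it delivered the unhalved conclusion.
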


\begin{proof}
The argument is the same as in the proof of Lemma~\ref{lem:entry-of-L_f}, keeping in mind that $|A_n|=\frac{1}{2}|S_n|$. Also note that $A_n$ contains all even permutations, and thus contains all even products of disjoint two-cycles.
\end{proof}

\begin{rem} \label{SnAn}
Notice that the argument in the proofs of Lemmas~\ref{lem:entry-of-L_f} and~\ref{lem:entry-of-L_f-alternating} above uses the fact that we sum over all possible combinations of $\alpha_i\alpha_j, i<j\leq n$. This means that the Galois group of $f(x)$ must contain all even products of disjoint $2$-cycles, so that we can simultaneously send $\alpha_i$ to $\alpha_{\ell}$ and $\alpha_j$ to $\alpha_m$ for any $m \neq \ell$. Any subgroup of $S_n$ with this property must contain $A_n$, thus our argument only works when the Galois group of $f(x)$ is $A_n$ or $S_n$.
\end{rem}

\begin{prop}
Let $f(x)$ be of degree $n$ with roots $\alpha_1,\dots,\alpha_n$ and let $A = \sum_{i \leq n}\alpha_i^2$ and $B = \sum_{i < j}\alpha_i\alpha_j$, as above. If the Galois group of $f(x)$ is $S_n$, then 
$$\det(L_f) = \left\{ ((n-2)!)^n (n-1)(A+2B)\left((n-1)A-2B\right)^{n-1} \right\}^{1/2}.$$
If the Galois group of $f(x)$ is $A_n$, then 
$$\det(L_f) = \left\{ ((n-2)!)^n (n-1)(A+B)\left((n-1)A-B\right)^{n-1} \right\}^{1/2},$$
\end{prop}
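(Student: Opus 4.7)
\smallskip

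\noindent
\textbf{Proof proposal.} The plan is to reduce the determinant computation to the calculation of the determinant of the Gram matrix, apply the structural lemmas \ref{lem:entry-of-L_f} and \ref{lem:entry-of-L_f-alternating}, and then diagonalize the resulting matrix, which has a very special form. Concretely, since $C_f$ is a $d \times n$ basis matrix for the rank-$n$ lattice $L_f$ (with $d=n!$ in the $S_n$ case and $d=n!/2$ in the $A_n$ case), we have
$$\det(L_f) = \sqrt{\det(C_f^\top C_f)},$$
so it suffices to compute $\det(C_f^\top C_f)$.

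In the $S_n$ case, Lemma~\ref{lem:entry-of-L_f} tells us that
$$C_f^\top C_f = (p-q) I_n + q J_n, \qquad p = (n-1)!\, A, \quad q = 2(n-2)!\, B,$$
where $J_n$ is the $n\times n$ all-ones matrix. The eigenvalues of $J_n$ are $n$ (with multiplicity $1$, eigenvector $(1,\dots,1)^\top$) and $0$ (with multiplicity $n-1$), so the eigenvalues of $C_f^\top C_f$ are $p+(n-1)q$ (once) and $p-q$ ($n-1$ times). Substituting,
$$p+(n-1)q = (n-1)!\bigl(A+2B\bigr), \qquad p-q = (n-2)!\bigl((n-1)A - 2B\bigr),$$
and therefore
$$\det(C_f^\top C_f) = (n-1)!\,(A+2B)\cdot \bigl((n-2)!\bigr)^{n-1}\bigl((n-1)A-2B\bigr)^{n-1}.$$
Pulling out one factor of $(n-1)$ from $(n-1)!$ gives the claimed formula after taking the square root.

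The $A_n$ case proceeds identically: Lemma~\ref{lem:entry-of-L_f-alternating} yields the same structural form $C_f^\top C_f = (p-q)I_n + qJ_n$ but now with $p = (n-1)!\,A$ and $q = (n-2)!\,B$. The same eigenvalue calculation gives
$$p+(n-1)q = (n-1)!\,(A+B), \qquad p-q = (n-2)!\bigl((n-1)A-B\bigr),$$
whence
$$\det(C_f^\top C_f) = (n-1)\bigl((n-2)!\bigr)^{n}(A+B)\bigl((n-1)A-B\bigr)^{n-1},$$
and taking the square root yields the second formula. No real obstacle arises: the substantive input (that the Gram matrix has exactly two distinct entries, one on the diagonal and one off) is already packaged into Lemmas \ref{lem:entry-of-L_f} and \ref{lem:entry-of-L_f-alternating}, and the rest is the textbook eigenvalue computation for a matrix of the form $\alpha I + \beta J$.
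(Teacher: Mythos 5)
Your proposal is correct and follows essentially the same route as the paper: reduce to the Gram matrix via the lemmas, recognize the form $\alpha I_n + \beta J_n$, and take the product of its eigenvalues $p+(n-1)q$ (once) and $p-q$ ($n-1$ times). The arithmetic checks out in both the $S_n$ and $A_n$ cases, so there is nothing to add.
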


\proof
Using the notation of Lemmas~\ref{lem:entry-of-L_f} and~\ref{lem:entry-of-L_f-alternating}, observe that in the case of the symmetric group,
$$\det(L_f) = \sqrt{\det(C_f^{\top} C_f)} = \left\{ (n-2)!^n \det \begin{pmatrix} (n-1)A & \dots & 2B \\ \vdots & \ddots & \vdots \\ 2B & \dots & (n-1)A \end{pmatrix} \right\}^{1/2}.$$
Notice that
$$\begin{pmatrix} (n-1)A & \dots & 2B \\ \vdots & \ddots & \vdots \\ 2B & \dots & (n-1)A \end{pmatrix} = ((n-1)A-2B)I_n+2B J_n,$$
where $J_n$ is the all $1$'s matrix. The matrix $2BJ_n$ has eigenvalues $2nB$ with multiplicity $1$ and $0$ with multiplicity $n-1$. Adding $((n-1)A-2B)I_n$ shifts the eigenvalues by the same amount, so eigenvalues are $(n-1)A-2B$ with multiplicity $n-1$ and $(n-1)A-2B+2nB=(n-1)(A+2B)$ with multiplicity~$1$. Then the determinant is obtained by taking the product of these eigenvalues. The argument is the same when the Galois group is $A_n$ with the entries in the matrix above being $B$ instead of~$2B$.
\endproof
\bigskip

\section{Conclusion}
\label{conclusion}

In this paper we presented a construction of lattices of prime rank $n$ in $d$-dimensional Euclidean spaces, $d \leq n!$, generated by roots of monic irreducible polynomials of degree $n$ with integer coefficients. We are especially interested in the conditions under which such algebraic lattices are generic well-rounded and have large automorphism groups. In fact, we believe that our methods should work well to produce such lattices even in the case when $n$ is composite, however it is more difficult to establish linear independence of these roots in the composite case. Indeed, we rely on the result of Dubickas~\cite{dub}, which guarantees linear independence of these roots whenever the trace of their minimal polynomial is nonzero in the case of prime degree. An interesting direction for future research would be to extend such linear independence criteria beyond the prime degree situation and then apply our method to produce a wider class of GWR algebraic lattices with large automorphism groups.
\bigskip

\noindent
{\bf Acknowledgements:} We wish to thank the anonymous referee for a thorough reading and multiple suggestions that improved the quality of this paper.

\bigskip

\bibliographystyle{plain}  

\end{document}